\documentclass[12pt,a4paper,leqn, reqno]{amsart}
\usepackage{color}
\usepackage{latexsym}
\usepackage{mathptmx}
\usepackage{amscd,amssymb,amsthm}
\usepackage{graphicx}
\def\ds{\displaystyle}
\vfuzz2pt 
\hfuzz2pt 

\numberwithin{equation}{section}
\theoremstyle{plain}
\newtheorem{theorem}{Theorem}[section]
\newtheorem{corollary}[theorem]{Corollary}
\newtheorem{lemma}[theorem]{Lemma}
\newtheorem{proposition}[theorem]{Proposition}

\theoremstyle{definition}
\newtheorem{definition}{Definition}[section]

\theoremstyle{remark}
\newtheorem{remark}{Remark}[section]
\newtheorem{remarks}{Remarks}
\newtheorem{example}{Example}[section]

\numberwithin{equation}{section}

\numberwithin{table}{section}

\numberwithin{figure}{section}

\setlength{\paperwidth}{210mm} \setlength{\paperheight}{297mm}
\setlength{\oddsidemargin}{0mm} \setlength{\evensidemargin}{0mm}
\setlength{\topmargin}{-20mm} \setlength{\headheight}{10mm}
\setlength{\headsep}{13mm} \setlength{\textwidth}{160mm}
\setlength{\textheight}{240mm} \setlength{\footskip}{15mm}
\setlength{\marginparwidth}{0mm} \setlength{\marginparsep}{0mm}
\setlength{\paperwidth}{210mm} \setlength{\paperheight}{297mm}
\setlength{\oddsidemargin}{0mm} \setlength{\evensidemargin}{0mm}
\setlength{\topmargin}{-20mm} \setlength{\headheight}{10mm}
\setlength{\headsep}{13mm} \setlength{\textwidth}{160mm}
\setlength{\textheight}{240mm} \setlength{\footskip}{15mm}
\setlength{\marginparwidth}{0mm} \setlength{\marginparsep}{0mm}
\def\ds{\displaystyle}

\newcommand{\R}{\mathbb{R}}
\newcommand{\N}{\mathbb{N}}
\newcommand{\Z}{\mathbb{Z}}

\newcommand{\bea}{\begin{eqnarray*}}
\newcommand{\eea}{\end{eqnarray*}}
\newcommand{\bean}{\begin{eqnarray}}
\newcommand{\eean}{\end{eqnarray}}

\newcommand{\K}{\mathbb{K}}
\newcommand{\NB}{\mathbf{N}}
\newcommand{\KC}{\widehat{\mathbb{K}}}

\newcommand{\al}{\alpha}

\newcommand{\la}{\lambda}

\newcommand{\xt}{{(x,t)}}

\newcommand{\lam}{{(\la,m)}}


\newcommand{\ND}{\mathbf{\mathcal{N}}}

\begin{document}

\title[Fourier multipliers for Hardy spaces on Laguerre hypergroup]
{\bf Fourier multipliers for Hardy spaces on Laguerre hypergroup}
\author{\bf Atef Rahmouni}
\address{University of Carthage\\
Faculty of Sciences of Bizerte\\
Department of Mathematics Bizerte  7021 Tunisia.}
\email{Atef.Rahmouni@fsb.rnu.tn}
\keywords{ Laguerre hypergroup, Hardy space, Laguerre--Fourier transform, H\"{o}rmander multiplier.}
\begin{abstract}
The main purpose of this paper is to give an estimate for the Fourier--Laguerre transform on Hardy spaces in the setting of Laguerre hypergroup. The atomic and molecular characterization is
investigated which allows us to prove a version of H\"ormander's multiplier theorem on $H^p~(0<p\leq1).$
\end{abstract}
\maketitle
\section{Introduction}

The theory of Fourier multipliers is well developed on Euclidean spaces, with various results having been established to give sufficient conditions for a multiplier operator to be bounded on the Lebesgue spaces $L^p (p > 1)$ or Hardy spaces $H^p (0 < p\leq 1).$ Among these are H\"{o}rmander's multiplier theorem and its variants.

A bounded measurable function $M$ defined on $\mathbb{R}^n$ is said to be a multiplier for $H^p(\mathbb{R}^n),\linebreak 0 < p \leq \infty,$ if
$f \in L^2\cap H^p(\mathbb{R}^n)$ implies $\mathcal{F}^{-1}(M\widehat{f})\in H^p(\mathbb{R}^n)$ and $$ \|\mathcal{F}^{-1}(M\widehat{f})\|_{H^p(\mathbb{R}^n)}\leq C_p \|f\|_{H^p(\mathbb{R}^n)} \qquad\mbox{(with $C_p$ independent of $f$)},$$ where $\widehat{f}$ and  $\mathcal{F}^{-1}(f)$ denote the Fourier transform and the inverse Fourier transform, respectively. The multiplier theorem was originally due to H\"{o}rmander \cite{Hormander} on $\mathbb{R}^n.$
Calder\'on and Torchinsky \cite{CT} extended the $L^p(\mathbb{R}^n)$ multipliers to $H^p(\mathbb{R}^n)$ multipliers. A considerable effort has been made to extend the classical Fourier multiplier theory to groups and Lie group.
De Michele and Mauceri \cite{DeM} applied Coifman
and Weiss'theory \cite[Chapter 3]{COW1} to extend the $L^p$ multiplier theorem to the three--dimensional Heisenberg group  $\mathbb{H}.$ Lin \cite{Lin,Lin1} followed the same general approach of \cite{DeM} and extended their result to the more general case of the $(2n+1)$--dimensional Heisenberg group $\mathbb{H}^n.$
In the setting of hypergroups a version of H\"ormander's multiplier theorem was established in \cite{ST11} for $L^p$ functions $(p>1)$ on Bessel--Kingman hypergroups, a particular class of Ch\'ebli--Trim\`eche hypergroups with polynomial growth. For general Ch\'ebli--Trim\`eche hypergroups the $L^p$--Fourier multipliers were investigated by Bloom and Xu \cite{BL1} and extended their result to $H^p$ when $0 < p\leq 1$ (cf.\cite{BL11}).\\

The purpose of this paper is to investigate on local Hardy spaces on Laguerre hypergroup. To establish a version of H\"ormander's multiplier theorem, namely H\"ormander--type multipliers on $H^p(\mathbb{K})$  for $0 <p\leq1,$ coupled with the atomic and molecular characterizations of the local Hardy spaces on Laguerre hypergroups, but  a  well--known problem is the characterization of  $\mathcal{F}(f)$ for $f\in H^p(\mathbb{K}).$ Elements of  $H^p(\mathbb{K}),$ where $p<1,$ are not $L^1(\mathbb{K})$ functions, so how is one supposed to define their Fourier--Laguerre transforms?
To do this we employ the method used by Taibleson and Weiss \cite{TW} on euclidean space also motivated by the treatment  of  \cite{Bownik,Liu,Xu,BL1} which was the subject of the following result :


For $p\in(0, 1],$ the Fourier--Laguerre transform $\mathcal{F}(f)$ of $f\in H^p(\mathbb{K})$ is continuous function and satisfies the following estimate (cf. Theorem \ref{theorem6.6})
\begin{equation}\label{(4.100)}
|\mathcal{F}(f)(\lambda,m)|\leq C \|f\|_{H^p(\mathbb{K})}
\mathcal{N}(\lambda,m)^{\frac{Q}{2}(\frac{1}{p}-1)},
\end{equation}
where $\mathcal{N}(\lambda,m)$ the quasi--semi--norm on the dual of Laguerre hypergroup and $Q$ the homogenous dimension of $\mathbb{K}$ (cf. $\S 2$).

At the origin, the estimate (\ref{(4.100)}) forces
$f\in H^p\cap L^{1}(\mathbb{K})$ to have vanishing moments, as seen by the degree of 0 of $\mathcal{F}(f)$ at the origin, illustrating the necessity of the vanishing moments of the atoms. Away from the origin,
we show in Corollary \ref{t3.3.11} that the function
$|\widehat{f}(\lambda,m)|^{p} |\mathcal{N}(\lambda,m)|^{\frac{Q}{2}(p-2)}$
is integrable for $0<p\leq1,$ which is a generalization of Paley--type inequality on Laguerre hypergroup and by using the Marcinkiewicz interpolation theorem we obtain an analogue of the Paley--type inequality was extended (the range of $p$) to $L^{p}_{\alpha}(\mathbb{K}),~1<p\leq2,$
that is a Pitt--type inequality for the Fourier--Laguerre transform.\\

In the main result [Theorem \ref{multiplier}], when paired with the molecular characterization of local Hardy space on Laguerre hypergroup,
we showed that the multiplier operator $T_M(f)=\mathcal{F}^{-1}(M\widehat{f})$ is bounded provided the multiplier $M$ satisfies Mihlin-H\"ormander condition. The strategy of proof follows the classical approach, starting with kernel estimates. It will be more complicated in the general case, when the molecules are defined away from the origin.\\

The paper is organized as follows. We recall some results about harmonic analysis on $\mathbb{K}$ which we will need in the sequel are given in
$\S.$ 2. In $\S.$ 3 we give an appropriate definition of atoms  and molecules and investigate the molecular characterization of Hardy spaces $H^p(\mathbb{K})$ for $0<p\leq 1.$ $\S.$ 4, contains the main results of this paper, where some useful estimates for characters, prove of Theorem \ref{theorem6.6} and as we list some consequences of this theorem. Finally use the molecular characterization to obtain a version of H\"ormander's multiplier theorem and give application of H\"ormander's multiplier.
Finally, we mention that $C$ will be always used to denote a
suitable positive constant that is not necessarily the same in
each occurrence.

\section{Preliminaries on Laguerre hypergroup}

Most results in this section are known. We consistently use the same notations as those in \cite{BL, Trimeche}, and refer readers to \cite{S3} and \cite{BL, Trimeche} for most terminologies, notations, and detailed proofs.

Throughout the paper we denote by $\mathbb{K} = [0,+\infty) \times \mathbb{R}$
the Laguerre hypergroup which is the fundamental manifold of the radial function space for the Heisenberg group \cite{BL}. We recall that $(\mathbb{K}, \ast_{\alpha})$ is a commutative hypergroup  \cite{NT},
on which the involution and the Haar measure are respectively given by the
homeomorphism $(x, t) \rightarrow (x, t)^{-} = (x,-t)$ and the
Radon positive measure $dm_{\alpha} (x, t) = \frac {x^{2\alpha
+1}}{\pi \Gamma(\alpha+1)} dxdt.$  The unit element of
$(\mathbb{K}, \ast_{\alpha})$ is given by $ e = (0, 0),$ i.e. $\delta
_{(x,t)} \ast_{\alpha} \delta_{(0,0)} = \delta_{(0,0)} \ast_{\alpha} \delta_{(x,t)}
= \delta_{(x,t)} $ for all $(x, t) \in\mathbb{K}.$ The convolution product $ \ast_{\alpha}
$ is defined for two bounded Radon measures $ \mu $ and $ \nu $ on $
\mathbb{K}$ as follows
$$ \langle \mu  \ast_{\alpha} \nu, f \rangle = \int_{\mathbb{K} \times \mathbb{K}}
 T ^{(\alpha)} _{(x,t)} f (y, s)d\mu(x, t)d\nu(y, s),$$
where $\alpha $ is a fixed nonnegative real number and $\{T
^{(\alpha)} _{(x,t)}\}_{(x,t)\in \mathbb{K}} $   are the translation
operators on the Laguerre hypergroup (cf. \cite{BL, NT,
Stempak, Trimeche}), given by
\begin{displaymath}
T ^{(\alpha)} _{(x,t)} f (y, s)=\langle \delta_{(x,t)} \ast_{\alpha}
\delta_{(y,s)}, f \rangle = \left\{
\begin{array}{ll} \frac{\alpha}{\pi}\int_{0}^{1}\int_{0}^{2\pi}f((\xi, \eta)_{r,\theta})r(1-r^{2})^{\alpha-1}d\theta dr
&  \textrm{if $ \alpha>0 ,$}\\\\
\frac{1}{2\pi}\int_{0}^{2\pi}f((\xi, \eta)_{1,\theta}) d\theta &
\textrm{ if $ \alpha = 0 ,$}
\end{array} \right.
\end{displaymath}
where
$(\xi,\eta)_{r,\theta}=(\sqrt{x^{2}+y^{2}+2xyr\cos\theta},t+s+xyr\sin\theta).$
Note that $T ^{(\alpha)} _{(0,0)} f (y, s)=f(y, s).$

For the particular case $ \mu = fm_{\alpha}$ and $\nu =
gm_{\alpha}, $ $ f $ and $ g $ being two suitable functions on $
\mathbb{K},$ one has  $ \mu \ast_{\alpha}\nu = ( f \ast_{\alpha}
g)m_{\alpha},$ where $ f \ast_{\alpha} g  $ is the convolution
product of $ f $ and $ g $ given by  $$ f \ast_{\alpha} g(x, t) =
\int_{\mathbb{K} \times \mathbb{K}} T ^{(\alpha)} _{(-y, s)}
f (x, t)g(y,s)dm_{\alpha}(y, s).$$
The  harmonic analysis on the Laguerre hypergroup is generated by
the Laguerre operator
\begin{equation}\label{L}
\ds \mathfrak{L}_\al=\frac{\partial ^{2}}{\partial x^{2}}+\frac{2\alpha +1}{x}\frac{\partial }{\partial x}+x^{2}\frac{\partial ^{2}}{\partial t^{2}}, \quad \alpha \geq 0.
\end{equation}
For $\alpha= n-1, \, n\in
\mathbb{N}\setminus\{0\}, \,  \mathfrak{L}_{n-1}$ is the radial
part of the sub--Laplacian  of the Heisenberg group $\mathbb{H}^n.$
For all $(x,t)\in \mathbb{K} = [0, +\infty[\times \mathbb{R}$ and
$(\lambda,m)\in  \widehat{\mathbb{K}}
=\mathbb{R}\setminus\{0\}\times \mathbb{N}$ we denote by
$\varphi^{\alpha}_{(\lambda,m)}$ the function given by
$$\varphi^{\alpha}_{(\lambda,m)}(x,t)=e^{i\lambda
t}\mathcal{L}_{m}^{\alpha}(|\lambda|x^{2})$$ where
$\mathcal{L}_{m}^{\alpha}$ is the Laguerre function defined on
$[0, +\infty[ $ by
$\mathcal{L}_{m}^{\alpha}(x)=e^{-x/2}\frac{L^{\alpha}_{m}(x)}
{L^{\alpha}_{m}(0)},$
and $L^{\alpha}_{m}$ being the Laguerre polynomial of degree $m$
and order $\alpha $ \cite{MN, NT}.
$\varphi^{\alpha}_{(\lambda,m)}$ are eigenfunctions of the
operator $\mathfrak{L}_\al,$  namely one has
for any function $f\in L^{1}_{\alpha}(\mathbb{K}),$ with
$\mathfrak{L}_{\alpha} f\in
L^{1}_{\alpha}(\mathbb{K})$
\begin{equation}\label{spectre}
(\mathfrak{L}_\al f\widehat{)}\lam=-\ND\lam\widehat{f}\lam
\end{equation}
where $\ND(\la,m)=4|\la|(m+{\al+1\over2})$ is a quasi--norm on
$\widehat{\mathbb{K}} $ (cf. \cite{ST1}).\\

The dual  $\widehat{\mathbb{K}} $ (see \cite{BL}) of Laguerre hypergroup is the space of all bounded continuous and multiplicative
functions $\chi : \mathbb{K} \rightarrow \mathbb{C}$  such that
$\widetilde{\chi}= \chi,$ where
$\widetilde{\chi}(x,t)=\overline{\chi}(x,-t),~ (x,t)\in
\mathbb{\mathbb{K}}.$ Furthermore, $\widehat{\mathbb{K}} $ is the
collection $\{\varphi^{\alpha}_{(\lambda,m)}; (\lambda,m)\in
\mathbb{R}^{*} \times \mathbb{N}\}\cup \{\varphi^{\alpha}_{\rho};
\rho \geq 0\},$ where $\varphi^{\alpha}_{(\lambda,m)}$ are
Laguerre functions and $\varphi^{\alpha}_{\rho}(x) = j_{\alpha}(\rho
x)$ are  Bessel functions of first kind and order $ \alpha $
\cite{MN}.  The dual of the Laguerre hypergroup
$\widehat{\mathbb{K}}$ can be topologically identified with the
so--called Heisenberg fan \cite{FH}, i.e., the subset embedded in
$\mathbb{R}^{2}$ given by
$$\ds\cup_{m\in\N} \{ (\la,\mu)\in\R^2 : \mu= |\la|(2m+\al+1), \la\neq 0
\}\cup\{(0,\mu)\in\R^2 : \mu\geq0\}\sim
\{\mathbb{R}\setminus\{0\}\times\mathbb{N}\}\cup\{(0,0)\}.$$
Moreover, the subset $\{(0,\mu)\in\R^2 : \mu\geq0\}$ has zero
Plancherel measure, therefore it will be usually disregarded. The
topology on $\mathbb{K}$ is given by the norm $\mathbf{N}(x, t)=
(x^{4}+4t^{2})^{1/4},$ while we assign to $\widehat{\mathbb{K}}$
the topology generated by the quasi--semi--norm
$\mathcal{N}(\lambda,m)=4|\lambda|(m+\frac{\alpha+1}{2}).$\\
The Fourier--Laguerre transform of a suitable function $f$ on
$\mathbb{K}$ is given by
\begin{equation}\label{Fourier}
\widehat{f}(\lambda,m)=\int_{\mathbb{K}}
\varphi^{\alpha}_{(-\lambda,m)}(x,t) fdm_{\alpha}(x,t).
\end{equation}
It is well known that the Fourier--Laguerre transform given above is a
topological isomorphism from the Schwartz space on $\mathbb{K}$
onto $\mathcal{S}(\widehat{\mathbb{K}})$: the Schwartz space on
$\widehat{\mathbb{K}}$ (see \cite{AM, NT}). Its inverse is given
by
$$g^{\vee}=\int_{\widehat{\mathbb{K}}}\varphi^{\alpha}_{(\lambda,m)} g
d\gamma_{\alpha}(\lambda,m)$$ where $d\gamma_{\alpha} $  is the
Plancherel measure on $\widehat{\mathbb{K}}$ given by
$d\gamma_{\alpha}(\lambda,m)=|\lambda|^{\alpha+1}L^{\alpha}_{m}(0)
\delta_{m}\otimes
d\lambda.$

We denote by $ L^{p}_{\alpha}(\mathbb{K})$ (resp.
$L^{p}_{\alpha}(\mathbb{\widehat{K}})$ where $ 1 \leq p\leq \infty
$ the $p$--th Lebesgue space on $\mathbb{K}$ (resp. on $
\mathbb{\widehat{K}} $) formed by the measurable functions $ f :
\mathbb{K}\rightarrow \mathbb{C} $ (resp. $ \Phi :
\mathbb{\widehat{K}}\rightarrow \mathbb{C}$) such that $\|f\|
_{L^{p}_{\alpha} (\mathbb{{K}})} <+\infty $ (resp. $\|\Phi\|
_{L^{p}_{\alpha} (\mathbb{\widehat{K}})} <+\infty)$   where
\begin{displaymath}
\|f\|_{L^{p}_{\alpha}(\mathbb{K})} = \left\{ \begin{array}{ll}
\Bigg(\ds\int_{\mathbb{K}}|f(x,t)|^{p}dm_{\alpha}(x,t)\Bigg)^{1/p}&
\textrm{if $
p\in [1,+\infty[ ,$}\\
{\displaystyle{ess\:sup_{(x,t)\in \mathbb{K}}}}|f(x,t)|& \textrm{ if
$ p=+\infty ,$}
\end{array} \right.
\end{displaymath}
and
\begin{displaymath}
\|\Phi\|_{L^{p}_{\alpha}(\mathbb{\widehat{K}})} = \left\{
\begin{array}{ll}
\Bigg(\ds\int_{\mathbb{\widehat{K}}}|\Phi(\lambda,m)|^{p}
d\gamma_{\alpha}(\lambda,m)
\Bigg)^{1/p}&
\textrm{if $ p\in [1,+\infty[ ,$}\\
{\displaystyle{ess\:sup_{(\lambda,m)\in
\mathbb{\widehat{K}}}}}|\Phi(\lambda,m)|& \textrm{ if $ p=+\infty
.$}
\end{array} \right.
\end{displaymath}
We have the following Plancherel formula
$$ \|f\|_{L^{2}_{\alpha}({\mathbb{K}})} =
\|\widehat{f}\|_{L^{2}_{\alpha}(\widehat{\mathbb{K}})},\quad f\in
L^{1}_{\alpha}(\widehat{\mathbb{K}})\cap
L^{2}_{\alpha}(\widehat{\mathbb{K}}),$$ and we have
\begin{equation}\label{33}
\|\widehat{f}\|_{L^{\infty}_{\alpha}}\leq\|f\|_{L^{1}_{\alpha}}.\\
\end{equation}

For $\delta >0$ we define the dilation on $\mathbb{K}$ by
$\rho_\delta(x,t)=({x\over\delta},{t\over\delta^2}), \, (x,t)\in
\mathbb{K} .$   We will denoted by
$Q=2\al+4$ the homogenous dimension of $\mathbb{K}$ and by
$f_{\delta}(x,t) =\delta^{-Q}f\circ\rho_{\delta}(x,t)=
\delta^{-Q}f({x\over\delta},{t\over\delta^2})$ the dilated of the
function $f$ defined on $\mathbb{K}$ with respect to the measure $dm_\al$ in the sense that
$\|f_{\delta}\|_{L^{1}_{\alpha}(\mathbb{{K}})}
=\|f\|_{L^{1}_{\alpha}(\mathbb{{K}})}.$
We will define the  ball centered at $u=(x_0, t_0)$ of radius $r,$ i.e., the set
$$B(u,r)=\{(x, t)\in\mathbb{K}:~\mathbf{N}(x-x_0, t-t_0)<r \},$$
and we denote by $B(e,r) = \{(x, t)\in\mathbb{K}:~
\mathbf{N}(x, t)<r\} $ the open ball centered at $e$  with radius $r.$ The volume of the ball $B(u, r)$ is $C_Q~r^Q,$ where $C_Q$ is the volume of the unit ball $B(e,r).$
Let $I=(i^{1},i^{0})\in\mathbb{N}_{+}\times \mathbb{N}_{+},$ where $\mathbb{N}_{+}$ the set of all nonnegative integers, we set $d(I)=i^{1}+2i^{0}.$ If $P(x,t)=\sum_{I}a_{I}(x,t)^{I} $ is a polynomial where $(x, t)^I = x^{i^{1}}t^{i^{0}},$ then we call $\max\{d(I) : a_I \neq 0\}$ the homogeneous degree of $P(x, t).$ The set of all polynomials whose homogeneous degree $\leq s$ is denoted by $\mathcal{P}_s.$

\section{The Atomic Decomposition and Molecular Characterization}

The theory of Hardy spaces was also established on more general groups than $\mathbb{R}^n.$ Motivated by the work of Coifman and Weiss in \cite{COW,COW1} and others (have been introduced and studied Hardy spaces $H^p(X)$  on a space of homogeneous type) we introduced Hardy spaces on the Laguerre hypergroup which may be interpreted as Hardy spaces defined on a space of \textit{homogeneous type} $(X,\nu,\rho).$ By this we mean a topological space $X$ equipped with a continuous pseudometric $\rho$ and a positive measure $\nu$ satisfying
\begin{equation}\label{dublecondition}
\nu(E(\xi,2r)) \le  C \nu(E(\xi,r))
\end{equation}
with a constant $C$  independent of $\xi$  and $r>0$ and
$E(\xi,r)=\{\eta\in X \,:\, \rho(\xi,\eta)<r \},\
\rho(\xi,\eta)=|\xi-\eta|.$ We shall use this result in the case in which $X=\mathbb{K},$
$\xi=(x,t),\; \eta=(y,s) \in \mathbb{K},$
$\rho(\xi,\eta)=\max\{|x-y|,(t-s)^2 \},$
$d\nu(\xi) = d m_{\alpha}(x,t).$ It is clear that this measure satisfies the doubling condition (\ref{dublecondition}). The theory of Hardy spaces has been extended to other settings including homogeneous Lie groups, compact Lie groups and subsets of $\mathbb{R}^n$ etc. Some general references involving harmonic analysis and $H^p$ spaces are  \cite{CO,FS,S3,GR,St1}. It is well known that $H^p(X)$ can be defined either in terms of maximal functions or in terms of atomic decompositions (cf.\cite{S3}).
Namely, let $f\in \mathcal{S}'(\mathbb{K})$ the maximal function is defined by
$$\mathcal{M}(f)(x,t) = \sup_{\delta>0} |f\ast_{\alpha}\phi_\delta(x,t)|$$
where $\phi $ belongs to $\mathcal{S},$ the Schwartz space of
rapidly decreasing smooth functions  satisfying $\int \phi(x,t) d m_{\alpha}(x,t) =1.$ The delation $\phi_\delta$ is given by  $\phi_\delta(x, t) =\delta^{-Q}\phi(\frac{x}{\delta},\frac{t}{\delta^2}).$\\
We say that a tempered distribution $f\in\mathcal{S}'(\mathbb{K})$ is in $H^p(\mathbb{K})$ if $\mathcal{M}(f)$ is in $L^p_{\alpha}(\mathbb{K}).$ The
quasi--norm on $H^p$ is  $\|f\|^{p}_{H^p}=\|\mathcal{M}(f)\|^{p}_{L^p_{\alpha}},$ which
satisfies $\|f+g\|^{p}_{H^p}\leq \|f\|^{p}_{H^p}+\|g\|^{p}_{H^p}$ for
$0 < p \leq 1.$ When $p > 1, H^p$ and $L^p_{\alpha}$ are essentially the same since, by the celebrated theorem of Hardy and Littlewood,
$\|\mathcal{M}(f)\|^{p}_{L^p_{\alpha}}\leq C_{p}\|f\|^{p}_{L^p_{\alpha}};$ however, when $p\leq 1$ the space $H^p$ is much better adapted to problems arising in the theory of harmonic analysis.

Now, we describe the atomic decomposition, molecular characterization, and some properties for $H^p(\mathbb{K})$ centered at the origin $e\in\mathbb{K}$ which greatly simplifies the analysis of Hardy spaces which will be used later.
\begin{definition}
Let  $0 < p \leq 1\leq q \leq \infty$ with $p\neq q$ and $s\geq [ Q(1/p-1)].$ (Such an ordered triple $(p, q,s)$ is called admissible).
A function $a \in L^{q}_{\alpha}(\mathbb{K}),$ is called a $(p, q, s)$--atom with the center at the origin $e\in\mathbb{K},$ if it satisfies the following conditions
\begin{itemize}
\item[(i)] Supp $ a \subset B(e,r);$
\item[(ii)]$\|a\|_{L^q_\alpha(\mathbb{K})}\leq m_{\alpha}B(e,r)^{\frac{1}{q}-{1\over p}} =
    C~r^{Q(\frac{1}{q}-\frac{1}{p})};$
\item[(iii)]$\ds \int_{\mathbb{K}} a(x,t)P(x,t) dm_{\alpha}(x,t) = 0 $, ~~ for all $P\in \mathcal{P}_{s},$ with $ s\geq I = [Q(1/p
- 1)]$  where [ . ] denotes, as usual, the
``greatest integer not exceeding" function.
\end{itemize}
\end{definition}
Here, $(i)$ means that an atom must be a function with compact support, $(ii)$ is the size condition of atoms, and $(iii)$ is called the cancelation moment condition.
It is clear that $a\in L^p_{\alpha}(\mathbb{K})$ and $\|a\|_{L^p_{\alpha}(\mathbb{K})}\leq 1$ for any $(p, q, s)$--atom $a,$ by choosing $r = q/p > 1,
1/r' = 1- 1/r = 1- p/q,$ and using H\"older's inequality
\begin{eqnarray*}
\int_{\mathbb{K}} |a(x,t)|^p~dm_{\alpha}(x,t)&\leq& \Big(\int_{\mathbb{K}} (|a(x,t)|^p)^{r}~dm_{\alpha}(x,t)\Big)^{1/r}
\Big(\int_{B(e,r)}dm_{\alpha}(x,t)\Big)^{1/r'}\\
&=&\|a\|_{L^q_{\alpha}(\mathbb{K})}^{p}m_{\alpha}B(e,r)^{1-p/q}\leq 1.
\end{eqnarray*}
We say that $a$ is an exceptional atom when $\|a\|_{L^\infty_{\alpha}(\mathbb{K})}\leq 1.$
\begin{remark}
For every $(x_0,t_0),(x,t)\in\mathbb{K}$ and $r>0$ the function
$T^{(\alpha)}_{(x_0,t_0)} \chi_{B_{r}}(x,t)$ is supported in
$\widetilde{B}_r(x_0,t_0)$ and the following inequality is valid
(see \cite{Guliyev})
$$m_{\alpha}B_{r}(x_0,t_0)\leq Cr^Q\max\{1,(x_0/r)^{2\alpha+1}\} $$
$$m_{\alpha}\widetilde{B}_{r}(x_0,t_0)\leq Cr^Q\max\{1,(x_0/r)^{2\alpha+3}\} ,$$
where $\widetilde{B}_{r}(x_0,t_0)=\{(x,t)\in\mathbb{K} : |x-x_0|<r,
|t-t_0|<x_0(x_0+r)\}.$
So, $a(p, q, s)$--atom centered at $(x_0,t_0)\in\mathbb{K}$ is defined to be a $L^q_{\alpha}(\mathbb{K})$ function $a$ on $\mathbb{K}$ such that the translation $T^{(\alpha)}_{(x_0,t_0)} (a)(x,t)$ is not $a(p, q, s)$--atom centered at the origin.
Then the Hardy spaces associated with Laguerre hypergroup can be
regarded as the local version of $H^{p}.$
\end{remark}
\begin{theorem}(Atomic decomposition of $H^p$ \cite[Chapter 3]{S3})
Let $(p, q, s)$ be an admissible triple. Then any $f$ in $H^p$ can be represented as a linear combination of $(p, q, s)$--atoms
$f =\sum_{k=1}^{\infty}\beta_{k}a_{k},~\beta_{k}\in\mathbb{C},$ where the $a_{k}$ are $(p, q, s)$--atoms and the sum converges in $H^p.$ Moreover,
$\|f\|_{H^p} \thickapprox \inf \Big\{ \sum_{k=1}^\infty |\beta_{k}|^p :\sum_{k=1}^{\infty}\beta_{k}a_{k} \mbox{ is a decomposition of $f$
into\, }  atoms\Big\}.$
\end{theorem}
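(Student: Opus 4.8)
The plan is to establish the two halves of the equivalence separately. For the \emph{sufficiency} direction — that an $H^p$-convergent atomic series satisfies $\|f\|_{H^p}^p\lesssim\sum_k|\beta_k|^p$ — the key preliminary is the uniform bound $\|a\|_{H^p(\mathbb{K})}\le C$ for every $(p,q,s)$-atom $a$, with $C$ depending only on the admissible triple. I would obtain this by estimating the maximal function $\mathcal{M}(a)$ separately near and far from the supporting ball. If $\mathrm{Supp}\,a\subset B(e,r)$, then on a fixed dilate $B(e,Cr)$ one combines the $L^q_\alpha$-boundedness of the maximal operator (a consequence of the doubling property \eqref{dublecondition}; when $q=1$ one uses instead its weak-type $(1,1)$ bound), H\"older's inequality, and the size condition (ii); outside $B(e,Cr)$ one expands $\phi_\delta$ in its Taylor polynomial of homogeneous degree $s$ about $e$, subtracts it at no cost using the moment condition (iii), and exploits the rapid decay of $\phi\in\mathcal{S}$ together with $s\ge[Q(1/p-1)]$ to get an $L^p_\alpha$-summable tail. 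Granting $\|a\|_{H^p}\le C$, the $p$-subadditivity $\|f+g\|_{H^p}^p\le\|f\|_{H^p}^p+\|g\|_{H^p}^p$ yields $\|f\|_{H^p}^p\le C\sum_k|\beta_k|^p$ for every such decomposition, hence one inequality in the claimed equivalence.

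For the \emph{necessity} direction I would run the Calder\'on--Zygmund construction on the Laguerre hypergroup in the style of \cite[Chapter 3]{S3}. Fix $f\in H^p(\mathbb{K})$; for $k\in\mathbb{Z}$ the set $\Omega_k=\{(x,t)\in\mathbb{K}:\mathcal{M}(f)(x,t)>2^k\}$ is open and, by Chebyshev, of finite $m_\alpha$-measure. Take a Whitney-type covering of $\Omega_k$ by balls $B_{k,j}=B(u_{k,j},r_{k,j})$ with $r_{k,j}\approx\mathrm{dist}(u_{k,j},\Omega_k^c)$ and bounded overlap, a subordinate partition of unity $\{\eta_{k,j}\}$, and let $P_{k,j}\in\mathcal{P}_s$ be the unique polynomial matching $f$ against $\eta_{k,j}\,dm_\alpha$ on all monomials of homogeneous degree $\le s$. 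Put $b_{k,j}=(f-P_{k,j})\eta_{k,j}$ and $g_k=f-\sum_j b_{k,j}$. One checks $\|g_k\|_\infty\lesssim 2^k$, $g_k\to f$ in $\mathcal{S}'$ as $k\to\infty$ and $g_k\to0$ as $k\to-\infty$, so that $f=\sum_{k\in\mathbb{Z}}(g_{k+1}-g_k)$, and each difference $g_{k+1}-g_k$ expands as a sum over $j$ of functions supported in enlarged Whitney balls with the correct $L^q$-size (from $\|g_k\|_\infty\lesssim 2^k$ and the support estimate) and vanishing $\mathcal{P}_s$-moments (by the choice of $P_{k,j}$). Normalising by $\beta_{k,j}\approx 2^k\,m_\alpha(B_{k,j})^{1/p}$ turns these into $(p,q,s)$-atoms $a_{k,j}$, and the bounded overlap gives $\sum_{k,j}|\beta_{k,j}|^p\approx\sum_{k,j}2^{kp}m_\alpha(B_{k,j})\lesssim\sum_k 2^{kp}m_\alpha(\Omega_k)\lesssim\|\mathcal{M}(f)\|_{L^p_\alpha}^p=\|f\|_{H^p}^p$. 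A final check, again via the uniform atom bound and $p$-subadditivity to dominate the tails, shows that $\sum_{k,j}\beta_{k,j}a_{k,j}$ converges to $f$ in the $H^p$-quasinorm, not merely in $\mathcal{S}'$.

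The genuinely routine ingredients — the doubling property, the $L^q_\alpha$/weak-$L^1_\alpha$ bounds for the maximal operator, and the Taylor estimates for $\phi_\delta$ under the dilations $\rho_\delta$ — transfer from the Euclidean and homogeneous-group settings, because $(\mathbb{K},\rho,m_\alpha)$ is a space of homogeneous type carrying a compatible dilation structure of homogeneous dimension $Q=2\alpha+4$, with the monomials $(x,t)^I$ graded by $d(I)=i^1+2i^0$ playing the role of the Euclidean polynomials. The main obstacle, and the only place where the hypergroup structure truly intervenes, is that the maximal function is built from the \emph{averaging} translations $T^{(\alpha)}_{(x_0,t_0)}$, whose effective support is the ``ellipsoidal'' region $\widetilde B_r(x_0,t_0)$ of the Remark rather than a metric ball. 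The support bookkeeping in the Whitney decomposition, and the verification that the pieces near the boundary $\{x=0\}$ remain controlled, must therefore be carried out using the pair of estimates $m_\alpha B_r(x_0,t_0)\lesssim r^Q\max\{1,(x_0/r)^{2\alpha+1}\}$ and $m_\alpha\widetilde B_r(x_0,t_0)\lesssim r^Q\max\{1,(x_0/r)^{2\alpha+3}\}$. This is exactly what forces the \emph{local} character of $H^p(\mathbb{K})$: Whitney balls meeting a fixed neighbourhood of $\{x=0\}$ must be replaced by $L^\infty$-normalised exceptional atoms, and one must verify that their contribution to the $\ell^p$-sum is still dominated by $\|f\|_{H^p}^p$ — this boundary analysis is the technical heart of the argument.
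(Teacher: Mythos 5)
The paper contains no proof of this theorem at all: it is imported by citation to Folland--Stein, and nothing in the text carries out the argument on $\mathbb{K}$. Your outline therefore reconstructs the proof of the cited source rather than competing with an internal one, and its first half is fine: the uniform bound $\|a\|_{H^p}\le C$ via the near/far splitting of $\mathcal{M}(a)$, followed by $p$-subadditivity, is exactly the standard sufficiency argument and transfers to $(\mathbb{K},m_\alpha)$ once the maximal operator bounds of Guliyev/Stempak type are in hand.

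The necessity half, however, has a gap that you name but do not close, and it is precisely where the blanket citation is itself inadequate. The paper's atoms are by definition supported in balls $B(e,r)$ \emph{centred at the origin}, with the size condition (ii) calibrated to $m_\alpha B(e,r)\approx r^Q$; the substitute notion of an atom centred at $(x_0,t_0)$ is mediated by $T^{(\alpha)}_{(x_0,t_0)}$, which is an averaging operator spreading mass over $\widetilde{B}_r(x_0,t_0)$, not a point translation. Your Whitney construction produces pieces supported in metric balls at arbitrary centres $u_{k,j}$, and for such balls $m_\alpha(B(u,r))\approx r^3\max(x_0,r)^{2\alpha+1}$, which is \emph{not} comparable to $r^Q$ when $x_0\gg r$. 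Consequently the normalisation $\beta_{k,j}\approx 2^k m_\alpha(B_{k,j})^{1/p}$ does not yield atoms satisfying condition (ii) as the paper states it, and the bookkeeping $\sum_{k,j}2^{kp}m_\alpha(B_{k,j})\lesssim\|\mathcal{M}(f)\|_{L^p_\alpha}^p$ no longer matches the $r^{Q(1/q-1/p)}$ normalisation; likewise the polynomials $P_{k,j}$ must be produced by inverting a Gram matrix for the monomials $(x,t)^I$ with respect to $x^{2\alpha+1}dx\,dt$ on off-centre balls, and the uniformity of that inversion as the ball approaches or recedes from $\{x=0\}$ is exactly the degenerate case. You correctly isolate this boundary analysis as the technical heart, but a proof must actually carry it out --- either by reworking the decomposition in terms of the sets $\widetilde{B}_r$ and the measure-normalised size condition $m_\alpha(B)^{1/q-1/p}$, or by proving that $T^{(\alpha)}_{(x_0,t_0)}$ maps origin-centred atoms to uniformly bounded multiples of atoms. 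As written, the hardest step of the necessity direction is still open.
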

Let $0<p\leq 1.$ Our Hardy space $H^{p}_{q,s}(\mathbb{K})$
is constituted by all those $ f\in \mathcal{S}'(\mathbb{K})$  that can be represented by
\begin{equation}\label{sum}
f=\sum_{k=0}^{\infty} \beta_{k}a_{k}
\end{equation}
being $ \beta_{k}\in \mathbb{C}$ and $a_{k}$ is a
$p$--atom, for all $k\in\mathbb{N},$ where
$\sum_{k=0}^{\infty}|\beta_{k}|^{p}<\infty$ and the series in
(\ref{sum}) converges in $\mathcal{S}'(\mathbb{K}).$

We define on $H^{p}_{q,s}(\mathbb{K})$  the norm $\|-\|_{H^{p}_{q,s}(\mathbb{K})}$ by
$$ \|f\|_{H^{p}_{q,s}(\mathbb{K})}:=
\inf\Big(\sum_{k=0}^{\infty}|\beta_{k}|^{p}\Big)^{1/p},$$
where the infimum is taken over all those sequences
$\{\beta_{k}\}_{k\in\mathbb{N}}\subset \mathbb{C}$ such that $f$
is given by (\ref{sum}) for certain $p$--atoms $a_{k}, k\in\mathbb{N}.$
\begin{remarks}
\begin{verse}
\end{verse}
\textbullet~ While each $H^p_{q,s}(\mathbb{K})$ function has a decomposition into $(p, q, s)$--atoms, it is natural to compare the spaces of functions admitting decompositions into $(p, q, s)$--atoms and $(p, q', s')$--atoms. It was shown in \cite{S3} that for each $p,$ these spaces corresponding to different $(q, s)$ all coincide, hence we are dealing with a single space and so may be denoted simply by $H^{p}(\mathbb{K}).$
It is therefore convenient to let $q = 2,$ so the use of Plancherel's
formula becomes a powerful tool for the study of $H^p.$\\
\textbullet~\:  $H^p(\mathbb{K})$ is not a subspace of $L^p(\mathbb{K})$ when $p<1.$\\
Indeed, with the help of properties $(i),
(ii)$ and $(iii)$ for $a(p,\infty,s)$--atoms of $H^{p}(\mathbb{K}).$ We have
\begin{eqnarray}\label{Lp3.2} \|a\|_{L^{p}_{\alpha}(\mathbb{K})}=\Bigg(\int_{B(e,r)}|a(x,t)|^{p}dm_{\alpha}(x,t)
\Bigg)^{1/p}&\leq&  |B(e,r)|^{1/p}~\|a\|_{L^{\infty}_{\alpha}(\mathbb{K})}\nonumber \\
&\leq & C_{Q}^{1/p}~r^{\frac{Q}{p}}~r^{-\frac{Q}{p}}\nonumber\\
&=&C_{Q}^{1/p}.
\end{eqnarray}
Now, let $f\in H^{p}(\mathbb{K}).$ Then, $f\in\mathcal{S}'(\mathbb{K})$ and $f =\sum_{k}\beta_{k}a_{k},$
where $\beta_{k}\in\mathbb{C},$ and $a_{k}$ is $a(p,\infty,s)$--atom,
for every $k\in \mathbb{N},$ and $\sum_{k}|\beta_{k}|^{p}<\infty.$
By (\ref{Lp3.2} ), the series defining $f$ converges in $L^p_{\alpha}(\mathbb{K}).$ Hence, $f\in L^p_{\alpha}(\mathbb{K})$
and $\|f\|_{L^p_{\alpha}(\mathbb{K})}\leq C_{Q}^{1/p} \sum_{k}|\beta_{k}|.$
Using the fact that $\sum_{k}|\beta_{k}|\leq
\Big(\sum_{k}|\beta_{k}|^{p}\Big)^{1/p},$ we obtain
$$\|f\|_{L^p_{\alpha}(\mathbb{K})}\leq C_Q^{1/p} \|f\|_{H^p(\mathbb{K})}.$$
As a calculation shows, the atomic decomposition of an $f$ in $H^p$ does converge in $L^p,$ but in general it does not converge in $L^p$ to $f.$  For example, the difference $\delta(x,t) - \delta(b^{-1}(x,t))$ (where delta = Dirac delta--function and $b$ is not the identity element) lies in $H^p$ for $Q/(Q+1) < p < 1.$  It doesn't lie in $L^p$ because it's only a distribution, not a function; and any atomic decomposition of it will converge to 0 in the $L^p$ topology.  This point is raised in the book
of Folland--Stein \cite[Remark (2.18) (page 79)]{S3} .
\end{remarks}
Let us now introduce the molecules corresponding to the atoms we have just defined.
\begin{definition}
Let $0 < p \leq 1 \leq q \leq \infty,  p \neq q,$ and $ s\geq I = [Q(1/p
- 1)].$ Set $a=1-\frac{1}{p}+\varepsilon,~b=1-\frac{1}{q}+\varepsilon,$
where $\varepsilon >\max\{\frac{s}{Q},\frac{1}{p}-1\}.$
A function $M\in L^{q}_{\alpha}(\mathbb{K})$ called $a(p,q,s,\varepsilon)$--molecule centered at the origin $e\in\mathbb{K}.$ If it satisfies
\begin{itemize}
\item[(i)] $M(.,.).T^{(\alpha)}_{(0,0)}\mathbf{N}(.,.)^{Qb}\in L^q_\alpha(\mathbb{K});$
\item[(ii)] $\mathfrak{N}(M)=\|M\|^{\frac{a}{b}}_{L^q_\alpha(\mathbb{K})}
\|M(.,.).T^{(\alpha)}_{(0,0)}\mathbf{N}(.,.)^{Qb}\|^
{1-\frac{a}{b}}_{L^q_\alpha(\mathbb{K})}
<\infty ~~\\\mathfrak{N}(M)~~\mbox{is called the molecular norm of} ~~$M$;$
\item[(iii)] $\ds \int_{\mathbb{K}} a(x,t)P(x,t) dm_{\alpha}(x,t) = 0 $, \quad for all $P\in \mathcal{P}_{s}.$
    \end{itemize}
\end{definition}
The following result is very useful in establishing boundedness of linear operators
on $H^p.$
\begin{theorem}\cite{COW,TW}\label{theorem3.3}
\begin{itemize}
\item[(i)] Every $(p, q, s')$--atom $f$ is $a(p, q, s, \varepsilon)$--molecule for any
$\varepsilon>\max\{s/Q, 1/p-1\}, s \leq s'$ and $\mathfrak{N}(f)\leq C_1,$ where the constant $C_1$ is independent of the atom.
\item[(ii)] Every $(p, q, s, \varepsilon)$--molecule $M$ is in $H^p$ and $\|M\|_{H^p}\leq C_{2}\mathfrak{N}(M),$ where the constant
$C_2$ is independent of the molecule.
\end{itemize}
\end{theorem}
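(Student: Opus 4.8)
This is the classical atom--molecule equivalence of Taibleson--Weiss and Coifman--Weiss, and the plan is to transport their argument to the homogeneous--type structure of $(\mathbb{K},m_\alpha)$, using the dilations $\rho_\delta(x,t)=(x/\delta,t/\delta^2)$, the relation $m_\alpha B(e,r)=C_Q r^Q$, and $T^{(\alpha)}_{(0,0)}=\mathrm{Id}$, so that condition (i) of a molecule only involves the plain weight $\mathbf{N}(x,t)^{Qb}$. Part (i) is then a scaling identity: if $f$ is a $(p,q,s')$--atom centered at $e$ with $\mathrm{Supp}\,f\subset B(e,r)$, $\|f\|_{L^q_\alpha(\mathbb{K})}\le C\,r^{Q(1/q-1/p)}$ and $s'\ge s$, the moment condition (iii) is inherited at once, and since $\mathbf{N}(x,t)<r$ on $\mathrm{Supp}\,f$ one has $\|f\,\mathbf{N}^{Qb}\|_{L^q_\alpha(\mathbb{K})}\le r^{Qb}\|f\|_{L^q_\alpha(\mathbb{K})}\le C\,r^{Q(b+1/q-1/p)}=C\,r^{Qa}$ (using $b+1/q-1/p=1-1/p+\varepsilon=a$), giving conditions (i)--(ii). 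Substituting into $\mathfrak{N}(f)=\|f\|_{L^q_\alpha}^{a/b}\|f\,\mathbf{N}^{Qb}\|_{L^q_\alpha}^{1-a/b}$, the power of $r$ is $\tfrac{a}{b}\big((1/q-1/p)+(b-a)\big)=0$ because $b-a=1/p-1/q$; hence $\mathfrak{N}(f)\le C_1$ uniformly. Atoms centered away from $e$ reduce to this case via $T^{(\alpha)}_{(x_0,t_0)}$ and the volume bounds of the Remark above.

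Part (ii) is the real work. After a dilation $\rho_\delta$ (which rescales both $\|\cdot\|_{H^p}$ and $\mathfrak{N}$ by the same power $\delta^{Q(1/p-1)}$, hence is harmless) I would normalize $M$ so that its intrinsic radius $\sigma=(\|M\,\mathbf{N}^{Qb}\|_{L^q_\alpha}/\|M\|_{L^q_\alpha})^{1/(Qb)}$ equals $1$; then $\|M\|_{L^q_\alpha}\sim\mathfrak N(M)$. Put $B_k=B(e,2^k\sigma)$, $R_0=B_0$, $R_k=B_k\setminus B_{k-1}$, $M_k=M\chi_{R_k}$, so $M=\sum_{k\ge0}M_k$. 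A H\"older estimate on $R_k$ (where $\mathbf{N}\sim 2^k\sigma$) together with $m_\alpha B_k\sim(2^k\sigma)^Q$ gives $\|M_k\|_{L^q_\alpha(\mathbb{K})}\le C(2^k\sigma)^{-Qb}\|M\,\mathbf{N}^{Qb}\|_{L^q_\alpha(\mathbb{K})}$ for $k\ge1$, i.e. a gain of $(2^k\sigma)^{-Q\varepsilon}$ over the size of a $(p,q,s)$--atom on $B_k$. Each $M_k$ lacks vanishing moments, so I would correct it: fix the dual basis $\{p^\beta\}_{d(\beta)\le s}\subset\mathcal{P}_s$ with $\int_{B(e,1)}p^\beta\,(y,u)^{\beta'}dm_\alpha=\delta_{\beta\beta'}$ (it exists since the monomials of degree $\le s$ are independent on $B(e,1)$), transport it to $B_k$ by $\rho_{2^k\sigma}$ to get $p_k^\beta$ with $\int_{B_k}p_k^\beta\,(x,t)^{\beta'}dm_\alpha=\delta_{\beta\beta'}$ and $\|p_k^\beta\chi_{B_k}\|_\infty\le C(2^k\sigma)^{-Q-d(\beta)}$, and with the moments $m_k^\beta=\int M_k(x,t)^\beta dm_\alpha$ and their tails $M_k^\beta=\sum_{l\ge k}m_l^\beta$ (finite, $M_0^\beta=0$, by the molecule's moment condition $\sum_l m_l^\beta=0$) define
$$A_k=M_k-\sum_{d(\beta)\le s}m_k^\beta\,p_k^\beta\chi_{B_k}+\sum_{d(\beta)\le s}M_{k+1}^\beta\big(p_{k+1}^\beta\chi_{B_{k+1}}-p_k^\beta\chi_{B_k}\big).$$

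A telescoping computation shows $\sum_k A_k=M$; each $A_k$ is supported in $B_{k+1}$ and, by the biorthogonality of $\{p_k^\beta\}$ with the monomials, orthogonal to $\mathcal{P}_s$. The key estimate is $\|A_k\|_{L^q_\alpha(\mathbb{K})}\le C(2^k\sigma)^{-Qb}\|M\,\mathbf{N}^{Qb}\|_{L^q_\alpha(\mathbb{K})}$: one bounds $|m_k^\beta|\le\|(x,t)^\beta\|_{L^{q'}(R_k)}\|M_k\|_{L^q_\alpha}\le C(2^k\sigma)^{d(\beta)+Q/q'-Qb}\|M\,\mathbf{N}^{Qb}\|_{L^q_\alpha}$ and shows the tails $|M_{k+1}^\beta|$ are of the same order, \emph{the geometric series $\sum_{l>k}(2^l)^{d(\beta)-Qb+Q/q'}$ converging precisely because $d(\beta)\le s<Q\varepsilon$} --- this is exactly where $\varepsilon>s/Q$ is used --- and combines these with the $L^\infty$ bound on $p_k^\beta\chi_{B_k}$. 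Writing $A_k=\lambda_k\mathfrak a_k$ with $\mathfrak a_k$ a $(p,q,s)$--atom on $B_{k+1}$ then gives $|\lambda_k|\le C(2^k\sigma)^{-Qa}\|M\,\mathbf{N}^{Qb}\|_{L^q_\alpha}=C\,2^{-kQa}\mathfrak N(M)$ (using $b-1/p+1/q=a$); since $a=1-1/p+\varepsilon>0$ by the hypothesis $\varepsilon>1/p-1$, the series $\sum_k|\lambda_k|^p\le C\,\mathfrak N(M)^p\sum_k 2^{-kpQa}$ converges, so the atomic decomposition theorem stated above yields $M=\sum_k\lambda_k\mathfrak a_k\in H^p$ with $\|M\|_{H^p}\le C_2\,\mathfrak N(M)$.

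\textbf{Main obstacle.} The difficulty is entirely in (ii): organizing the polynomial corrections $A_k$ so that the telescoping identity holds, the vanishing moments are restored, and the tail moments $M_{k+1}^\beta$ stay of the same order as the $m_k^\beta$ --- this last point is exactly what forces $\varepsilon>s/Q$, while $\varepsilon>1/p-1$ is what makes the final geometric series $\sum 2^{-kpQa}$ summable. The supporting ingredients (existence and uniform $L^\infty$ bounds for the rescaled dual polynomials on $B(e,R)$, and $|(x,t)^\beta|\lesssim\mathbf{N}(x,t)^{d(\beta)}$) are elementary once one uses the dilation structure of $(\mathbb{K},m_\alpha)$; after that the summation and the appeal to the atomic characterization are routine, and part (i) is only the scaling identity above.
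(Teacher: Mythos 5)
The paper offers no proof of this theorem --- it is quoted from Coifman--Weiss and Taibleson--Weiss --- and your argument is exactly the standard proof from those references, correctly transposed to $(\mathbb{K},m_\alpha)$: the scaling identity for (i) (the exponent of $r$ in $\mathfrak{N}(f)$ does vanish because $b-a=1/p-1/q$), and for (ii) the dyadic annular splitting, the rescaled dual polynomial bases $p_k^\beta$, the telescoped moment corrections (where $\varepsilon>s/Q$ makes the tail moments $M_{k+1}^\beta$ comparable to $m_k^\beta$), and the $\ell^p$ summation of the coefficients (where $\varepsilon>1/p-1$ gives $a>0$) all check out. The only quibbles are cosmetic: the gain of $\|M_k\|_{L^q_\alpha}$ over the atom size on $B_k$ is $(2^k\sigma)^{-Qa}$ rather than $(2^k\sigma)^{-Q\varepsilon}$, and the dilation invariance of $\mathfrak{N}$ holds for the $H^p$-normalized dilation $\delta^{-Q/p}f\circ\rho_\delta$ --- neither affects the argument.
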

We can define $H^{p}(\mathbb{K})$ as follows (see Taibleson and Weiss \cite{TW}). Let $0<p\leq 1,~ f\in H^{p}(\mathbb{K})$ if and only if
$$f=\sum_{k}\beta_{k}M_{k}~~\mbox{in the sense of distribution,}$$
where $M_{k}$ are $a(p,q,s)$--molecular with $\mathfrak{N}(M_{k})<C $
for all $k$ and $\sum_{k}|\beta_{k}|^{p}<\infty.$
Moreover,
$$\|f\|_{H^p(\mathbb{K})} \thickapprox \inf \Big\{ \sum_{k=1}^\infty |\beta_{k}|^p :
\sum_{k=1}^{\infty}\beta_{k}M_{k} \mbox{ is a molecular decomposition of} ~~f\Big\}.$$

\begin{remark}
As a consequence of the atomic and molecular characterizations of
$H^p,$ to show that a linear map $T$ is bounded on $H^p$ it suffices to show that whenever $f$ is a $p$--atom then $Tf$ is a $p$--molecule and
$\mathfrak{N}(Tf)\leq C$ for some constant $C$ independent of $f.$
\end{remark}

\section{Main Result and Applications}

\subsection{Estimates for characters}
First we stat  the following lemma which has its own interest.
\begin{lemma}\label{lemma2}
For all $\lam\in \KC$ and $\xt\in \K$ and for any integer $I\in \N$
large enough, the character $\varphi^\al_\lam\xt$ satisfies
\begin{equation}
\label{estimate2}
\Big{|}{\partial^I\over\partial\la^I}\varphi^\al_\lam\xt\Big{|}\leq
C~\NB\xt^{2I},
\end{equation}
and
\begin{equation}
\label{estimate3}
\Big{|}{\partial^I\over\partial\la^I}\varphi^\al_\lam\xt\Big{|}\leq
C~\NB\xt^{2I}\varphi^\al_\lam\xt.
\end{equation}
\end{lemma}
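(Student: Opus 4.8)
The plan is to work directly from the explicit formula $\varphi^\al_\lam(x,t)=e^{i\la t}\mathcal{L}_m^\al(|\la|x^2)$ and differentiate in $\la$. Because of the factor $|\la|$ inside the Laguerre function, it is cleanest to first treat $\la>0$ and $\la<0$ separately (the estimate being symmetric under $\la\mapsto-\la$ once one notes $\NB(x,t)=(x^4+4t^2)^{1/4}$ is even in $t$ and the character satisfies $\overline{\varphi^\al_\lam}=\varphi^\al_{(-\la,m)}$), so assume $\la>0$ and write $\varphi^\al_\lam(x,t)=e^{i\la t}\mathcal{L}_m^\al(\la x^2)$. Applying the Leibniz rule to the product $e^{i\la t}\cdot\mathcal{L}_m^\al(\la x^2)$ gives
\begin{equation*}
{\partial^I\over\partial\la^I}\varphi^\al_\lam(x,t)=\sum_{k=0}^I\binom{I}{k}(it)^{I-k}e^{i\la t}\,x^{2k}\,(\mathcal{L}_m^\al)^{(k)}(\la x^2).
\end{equation*}
The first key step is therefore a pointwise bound on the derivatives of the single-variable function $\mathcal{L}_m^\al(u)=e^{-u/2}L_m^\al(u)/L_m^\al(0)$ on $[0,\infty)$: since $L_m^\al$ is a polynomial of degree $m$, each $(\mathcal{L}_m^\al)^{(k)}$ is $e^{-u/2}$ times a polynomial, hence is bounded on $[0,\infty)$ by a constant $C(m,\al,k)$, and in fact $|(\mathcal{L}_m^\al)^{(k)}(u)|\le C\,e^{-u/4}$ for all $u\ge0$ after absorbing the polynomial into a slightly weaker exponential. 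This already controls each term by $C\,|t|^{I-k}x^{2k}$, and since $|t|\le\tfrac12\NB(x,t)^2$ and $x^2\le\NB(x,t)^2$, every summand is $\le C\,\NB(x,t)^{2(I-k)}\NB(x,t)^{2k}=C\,\NB(x,t)^{2I}$. Summing the $I+1$ terms yields \eqref{estimate2}.

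For \eqref{estimate3} the strategy is to extract the factor $\varphi^\al_\lam(x,t)$, whose modulus is $\mathcal{L}_m^\al(\la x^2)=e^{-\la x^2/2}L_m^\al(\la x^2)/L_m^\al(0)$ — note $|\varphi^\al_\lam(x,t)|$ can be small (it is a decaying character), so \eqref{estimate3} is a genuinely stronger statement than \eqref{estimate2}. The point is that each derivative $(\mathcal{L}_m^\al)^{(k)}(u)=e^{-u/2}P_k(u)$ with $P_k$ a polynomial, and we want to dominate it by $\NB$-powers \emph{times} $\mathcal{L}_m^\al(u)=e^{-u/2}L_m^\al(u)/L_m^\al(0)$. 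Away from the (finitely many) zeros of $L_m^\al$ on $[0,\infty)$ the ratio $P_k(u)L_m^\al(0)/L_m^\al(u)$ is continuous and grows at most polynomially, so it is bounded by $C(1+u)^{N}$ for some $N=N(m,\al)$; near a zero $u_0$ of $L_m^\al$ one must check that the numerator $(\mathcal{L}_m^\al)^{(k)}$ vanishes to at least the same order, which follows from Leibniz' rule applied to $e^{-u/2}L_m^\al(u)$ together with the fact that $L_m^\al$, being a solution of Laguerre's second-order ODE $uL''+(\al+1-u)L'+mL=0$, has only simple zeros — so in fact $(\mathcal{L}_m^\al)^{(k)}$ and $\mathcal{L}_m^\al$ have matching zero sets (with multiplicity) and the ratio extends continuously across each $u_0$. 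Hence $|(\mathcal{L}_m^\al)^{(k)}(u)|\le C(1+u)^N\,\mathcal{L}_m^\al(u)$, and choosing $I$ ``large enough'' (namely $2I\ge$ the exponent needed to absorb $(1+\la x^2)^N\le C\,\NB(x,t)^{2N}\cdot(\text{const depending on }\la)$, using that $\la$ ranges over a fixed bounded portion once we also use the factor $|t|^{I-k}$ to supply the remaining powers) gives each summand $\le C\,\NB(x,t)^{2I}\,|\varphi^\al_\lam(x,t)|$, and summing proves \eqref{estimate3}.

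The main obstacle is the zero-order matching near the roots of $L_m^\al$ needed for \eqref{estimate3}: one has to be careful that differentiating $e^{-u/2}L_m^\al(u)$ does not create a derivative term that fails to vanish at a zero of $L_m^\al$. This is resolved by observing that $(\mathcal{L}_m^\al)'=-\tfrac12\mathcal{L}_m^\al+e^{-u/2}(L_m^\al)'/L_m^\al(0)$, and iterating, so that the ``dangerous'' pieces involve $(L_m^\al)^{(j)}$; since $L_m^\al$ has only simple zeros, $(L_m^\al)'$ does not vanish there, but then such a term is divided by $\mathcal{L}_m^\al$ which vanishes — so the correct move is \emph{not} to demand termwise matching but to bound $|(\mathcal{L}_m^\al)^{(k)}(u)|$ on the compact set $\{u:\la x^2\le R\}$ by a constant and on $\{u\ge R\}$ (for $R$ past all zeros of $L_m^\al$) use that both numerator and $\mathcal{L}_m^\al(u)$ are $e^{-u/2}$ times nonvanishing polynomials, so their ratio is $O(u^N)$. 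Then the uniform statement over all $\lam\in\KC$, $(x,t)\in\K$ follows once $I$ is taken large enough that $\NB(x,t)^{2I}$ dominates both the $R$-dependent constant regime (where $\NB(x,t)^{2I}\gtrsim 1$ unless $(x,t)$ is near $e$, and near $e$ one has $\varphi^\al_\lam\to1$ so \eqref{estimate3} reduces to \eqref{estimate2}) and the polynomial-growth regime; this is exactly the role of the phrase ``for any integer $I$ large enough'' in the statement. I would present \eqref{estimate2} in full and then indicate \eqref{estimate3} as the refinement obtained by carrying the factor $|\varphi^\al_\lam(x,t)|$ through the same computation.
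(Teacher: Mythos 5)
Your argument for \eqref{estimate2} is correct and is essentially the computation in the paper: both start from Leibniz's rule applied to $e^{i\la t}\,\mathcal{L}_m^\al(|\la|x^2)$ and then bound the resulting factors. The only cosmetic difference is that the paper rewrites $\partial_\la^j\mathcal{L}_m^\al(|\la|x^2)$ as a combination of the shifted characters $\varphi^{\al+i}_{(\la,m-i)}$ and uses $|\varphi^{\al+i}_{(\la,m-i)}|\le1$, whereas you bound $(\mathcal{L}_m^\al)^{(k)}$ on $[0,\infty)$ directly; the final passage $|t|^{I-k}x^{2k}\le C\,\NB(x,t)^{2I}$ is the same in both.

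Your proof of \eqref{estimate3}, however, has a genuine gap, located exactly where you sensed trouble. The claim that ``$(\mathcal{L}_m^\al)^{(k)}$ and $\mathcal{L}_m^\al$ have matching zero sets (with multiplicity)'' is false: at a simple zero $u_0>0$ of $L_m^\al$ one has $(\mathcal{L}_m^\al)'(u_0)=e^{-u_0/2}(L_m^\al)'(u_0)/L_m^\al(0)\neq0$, precisely because the zero is simple. Consequently the pointwise domination $|(\mathcal{L}_m^\al)^{(k)}(u)|\le C(1+u)^N|\mathcal{L}_m^\al(u)|$ cannot hold near $u_0$, and your fallback (bounding the numerator by a constant on a compact set) does not repair this: on the hypersurface $\{|\la|x^2=u_0\}$ the right-hand side of \eqref{estimate3} vanishes identically while the left-hand side does not (for $I=1$ it contains the term $x^2|(\mathcal{L}_m^\al)'(u_0)|\neq0$). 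No choice of ``$I$ large enough'' changes this, because the failure occurs on a whole hypersurface rather than at isolated bad parameters. You should know that the paper's own proof of \eqref{estimate3} rests on the asserted inequality $|\varphi^{\al+i}_{(\la,m-i)}(x,t)|\le C\,|\varphi^{\al}_{(\la,m)}(x,t)|$, which fails for the same reason (the zeros of $L^{\al+i}_{m-i}$ and $L^\al_m$ interlace but do not coincide); so for $m\ge1$ the estimate \eqref{estimate3} is problematic as stated, and what you have run into is a defect of the lemma itself rather than a trick you failed to find. For $m=0$, where $\mathcal{L}_0^\al(u)=e^{-u/2}$ has no zeros, both your argument and the paper's go through.
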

\begin{proof}
\textbullet~ First, let us prove (\ref{estimate2}). By using Leibnitz's formula for the I'th derivative of a
product and the recurrence identity for Laguerre polynomials
${\partial^I\over\partial\la^I}L_m^\al(z)=(-1)^IL_{m-I}^{\al+I}$
we obtain for $\la>0$
\begin{eqnarray}\label{estimate4}
{\partial^I\over\partial\la^I}\varphi^\al_\lam\xt &=&
\sum_{j=0}^I\binom{I}{j}(-it)^{I-j}e^{-i\la t}{\partial^j\over\partial\la^j}[\mathcal{L}_m^\al(|\la|x^2)]\nonumber\\
&=& \sum_{j=0}^I\binom{I}{j}(-it)^{N-j}e^{-i\la t}
\sum_{i=0}^j\binom{j}{i}x^{2k}(-1)^i {L_{m-i}^{\al+i}(|\la|x^2)\over
L_m^\al(0)}
(-{x^2\over2})^{j-i}e^{-{|\la|x^2\over2}}\nonumber\\
&=&\sum_{j=0}^{I}\binom{I}{j}(-it)^{I-j}x^{2j}(-1/2)^{j}
\sum_{i=0}^{j}\binom{j}{i}(-1/2)^{-i}{L_{m-i}^{\al+k}(0)\over
L_{m}^{\al}(0)} \varphi_{(\la,m-i)}^{\al+i}\xt.\nonumber\\
\end{eqnarray}
Now, using the fact  that $|\varphi_{(\la,m-i)}^{\al+i}\xt|\leq1$
and that
$$\Big|\sum_{i=0}^{j}\binom{j}{i}(-1/2)^{-k}{L_{m-i}^{\al+i}(0)\over
L_{m}^{\al}(0)}\Big|\leq  2^{j} $$ it follows
\begin{eqnarray*}
\Big{|}{\partial^I\over\partial\la^I}\varphi^\al_\lam\xt\Big{|}&\leq&
\sum_{j=0}^{I}\binom{n}{j}(|t|)^{I-j}x^{2j}
= (x^2+|t|)^I\\
&\leq& C(x^4+4t^2)^{{I\over2}}=C~\NB\xt^{2I}.
\end{eqnarray*}
An analogous results follows for $\la<0,$ as required to prove inequality (\ref{estimate2}). \\
\textbullet~ The inequality (\ref{estimate3}), follows from (\ref{estimate4}),
use the fact that $|\varphi_{(\la,m-i)}^{\al+i}\xt|\leq C |\varphi_{(\la,m)}^{\al}\xt|.$ As required to finish the proof.
\end{proof}
\begin{lemma}\label{lemma1}
Let $f$ be a $n-$differentiable function over $\mathbb{R}$ and let
$a$  be a real number. Then the function $g(x)=f(ax^2)$ is
$n-$differentiable and satisfies
$$g^{(n)}(x)=\sum_{i=0}^{{n\over2}}\alpha_ia^{n-i}x^{n-2i}f^{(n-i)}(ax^2), \quad \mbox{ if } n \mbox{ is even }.$$
$$g^{(n)}(x)=\sum_{i=0}^{{n-1\over2}}\beta_ia^{n-i}x^{n-2k}f^{(n-i)}(ax^2), \quad \mbox{ if } n \mbox{ is odd }.$$
$(\alpha_i)$  and $(\beta_i)$ are two real sequences.
\end{lemma}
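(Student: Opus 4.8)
The plan is to prove both identities simultaneously by induction on the order of the derivative, phrasing them as the single uniform formula
\[
g^{(k)}(x)=\sum_{i=0}^{\lfloor k/2\rfloor} c_i^{(k)}\, a^{k-i}\, x^{k-2i}\, f^{(k-i)}(ax^2),\qquad 0\le k\le n,
\]
with real constants $c_i^{(k)}$; for even $k$ these are the coefficients $\alpha_i$ and for odd $k$ the coefficients $\beta_i$. The base case $k=0$ is trivial, and $k=1$ is just the chain rule, $g'(x)=2ax\,f'(ax^2)$, so $\beta_0=2$. For the inductive step, suppose the formula holds for some $k<n$. Since $f$ is $n$-times differentiable, each $f^{(k-i)}$ appearing is differentiable, hence $g^{(k)}$ is differentiable, and applying the product and chain rules to a single summand gives
\[
\frac{d}{dx}\Bigl(a^{k-i}x^{k-2i}f^{(k-i)}(ax^2)\Bigr)=(k-2i)\,a^{k-i}x^{k-2i-1}f^{(k-i)}(ax^2)+2\,a^{k-i+1}x^{k-2i+1}f^{(k-i+1)}(ax^2).
\]

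I would then re-index the sum of these two families of terms into the target shape $a^{(k+1)-j}x^{(k+1)-2j}f^{((k+1)-j)}(ax^2)$. The first term on the right has exactly this shape with $j=i+1$ (the powers of $a$ and $x$ and the order of $f$ all match), and the second term with $j=i$. Collecting, one obtains
\[
g^{(k+1)}(x)=\sum_{j}\Bigl(2\,c_j^{(k)}+(k-2j+2)\,c_{j-1}^{(k)}\Bigr)a^{(k+1)-j}x^{(k+1)-2j}f^{((k+1)-j)}(ax^2),
\]
with the convention $c_j^{(k)}=0$ for $j$ outside $0\le j\le\lfloor k/2\rfloor$, which is the recurrence defining $c^{(k+1)}$ from $c^{(k)}$.

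The only point that requires a genuine (if small) argument is that the summation range on the right is precisely $0\le j\le\lfloor(k+1)/2\rfloor$, i.e. no negative power of $x$ ever appears. When $k$ is odd, $i$ runs up to $(k-1)/2$, so $j=i+1$ runs up to $(k+1)/2=\lfloor(k+1)/2\rfloor$, and the top term has $x$-exponent $(k+1)-2\cdot(k+1)/2=0$, which is harmless. When $k$ is even, $i$ runs up to $k/2$; the candidate term at $j=k/2+1$ could only come from the first term above with $i=k/2$, but its coefficient is $k-2i=0$, so it drops out and the range stays $0\le j\le k/2=\lfloor(k+1)/2\rfloor$. This vanishing of the ``boundary'' term is exactly why the even and odd formulas carry the stated summation limits; it is pure bookkeeping, and it is the main — indeed essentially the only — thing to check. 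It is also worth noting that forming $g^{(n)}$ differentiates $f$ at most $n$ times, so the hypothesis that $f$ is $n$-times differentiable is precisely what the argument consumes.
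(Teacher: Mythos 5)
Your proof is correct and takes the same route as the paper, whose entire proof is the single phrase ``By induction''; you have simply supplied the details of exactly that induction, including the one point worth checking (the vanishing of the boundary term when $k$ is even, which keeps the summation limits as stated).
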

\begin{proof} By induction.\end{proof}
\begin{proposition}\label{p3.3.10}
For all $(x,t)\in \mathbb{K}$ and $(\lambda, m)\in
\widehat{\mathbb{K}}$ the function
$\varphi^{\alpha}_{(\lambda,m)}(x,t)$ satisfies
\begin{equation}
\varphi^{\alpha}_{(\lambda,m)}(x,t)=\sum_{k+\ell\leq I}
\omega_{k,\ell}(\lambda, m, \alpha)~ x^{k}t^{\ell}+R_{\theta}(x,t), \,\,
 0<\theta<1,
\end{equation}
where
\begin{equation}
|R_{\theta}(x,t)|\leq C~\sum_{k+\ell=
I+1}x^{k}|t|^{\ell}\mathcal{N}(\lambda,m)^{\frac{k}{2}+\ell}, \quad
\mbox{if } \, k \quad  \mbox{is even}.
\end{equation}
An analogous  result  follows for $k$ is odd .

Here  $C$ is a constant depending only on $I$ and $ \omega_{k,\ell}(\lambda, m,
\alpha)$ are functions expressed by mean of $\lambda, m$ and $
\alpha.$
\end{proposition}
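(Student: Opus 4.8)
Here is the plan.

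The plan is to treat $\varphi^\al_\lam\xt=e^{i\la t}\,\mathcal{L}_m^\al(|\la|x^2)$ as a smooth function of $\xt$ near the origin and to apply the two–variable Taylor formula of order $I$ with a single intermediate point, i.e. to apply the one–dimensional Taylor formula to $u\mapsto\varphi^\al_\lam(ux,ut)$ on $[0,1]$. This yields
\[
\varphi^\al_\lam\xt=\sum_{k+\ell\le I}\frac{1}{k!\,\ell!}\Big(\frac{\partial^{k+\ell}\varphi^\al_\lam}{\partial x^{k}\partial t^{\ell}}\Big)(0,0)\,x^{k}t^{\ell}+R_\theta\xt,\qquad R_\theta\xt=\sum_{k+\ell=I+1}\frac{x^{k}t^{\ell}}{k!\,\ell!}\Big(\frac{\partial^{k+\ell}\varphi^\al_\lam}{\partial x^{k}\partial t^{\ell}}\Big)(\theta x,\theta t)
\]
for some $\theta\in(0,1)$. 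The coefficients $\omega_{k,\ell}(\la,m,\al):=\frac{1}{k!\ell!}(\partial_x^{k}\partial_t^{\ell}\varphi^\al_\lam)(0,0)$ are then manifestly functions of $\la,m,\al$ alone (via Lemma \ref{lemma1} one sees $\omega_{2j,\ell}$ is a constant multiple of $\la^{\ell}|\la|^{j}(\mathcal{L}_m^\al)^{(j)}(0)$), and since $x\mapsto\mathcal{L}_m^\al(|\la|x^2)$ is even, $\omega_{k,\ell}=0$ whenever $k$ is odd — this is the ``$k$ odd'' case. Everything thus reduces to estimating the mixed derivatives of $\varphi^\al_\lam$ on $\K$.

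For the $t$–derivatives this is immediate: $\partial_t^{\ell}\varphi^\al_\lam\xt=(i\la)^{\ell}\varphi^\al_\lam\xt$, so $|\partial_t^{\ell}\varphi^\al_\lam\xt|\le|\la|^{\ell}$. For the $x$–derivatives I would invoke Lemma \ref{lemma1} with $f=\mathcal{L}_m^\al$ and $a=|\la|$, giving $\frac{\partial^{k}}{\partial x^{k}}\mathcal{L}_m^\al(|\la|x^2)=\sum_{i=0}^{\lfloor k/2\rfloor}c_i\,|\la|^{k-i}x^{k-2i}(\mathcal{L}_m^\al)^{(k-i)}(|\la|x^2)$ with absolute constants $c_i$. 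The one genuinely new ingredient needed is the uniform derivative estimate $|(\mathcal{L}_m^\al)^{(p)}(z)|\le C_p(m+\tfrac{\al+1}{2})^{p}$ for $z\ge0$. To prove it I would write $\mathcal{L}_m^\al(z)=e^{-z/2}L^\al_m(z)/L^\al_m(0)$, apply the Leibniz rule together with the standard Laguerre identity $\frac{d^{i}}{dz^{i}}L^\al_m(z)=(-1)^{i}L^{\al+i}_{m-i}(z)$ (already used in the proof of Lemma \ref{lemma2}), and re–assemble to
\[
(\mathcal{L}_m^\al)^{(p)}(z)=\sum_{i=0}^{p}\binom{p}{i}(-1)^{i}\Big(-\tfrac12\Big)^{p-i}\,\frac{L^{\al+i}_{m-i}(0)}{L^\al_m(0)}\,\mathcal{L}_{m-i}^{\al+i}(z);
\]
then $|\mathcal{L}_{m-i}^{\al+i}(z)|\le1$, and since $L^\be_n(0)=\binom{n+\be}{n}$,
\[
\frac{L^{\al+i}_{m-i}(0)}{L^\al_m(0)}=\frac{\Gamma(m+1)\,\Gamma(\al+1)}{\Gamma(m-i+1)\,\Gamma(\al+i+1)}=\frac{m(m-1)\cdots(m-i+1)}{(\al+1)(\al+2)\cdots(\al+i)}\le m^{i}\le C\Big(m+\tfrac{\al+1}{2}\Big)^{p}
\]
for $0\le i\le p$ (the denominator is $\ge1$ because $\al\ge0$); summing the $p+1$ terms gives the bound.

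It then remains to assemble. Combining the two computations and using $0<\theta<1$,
\[
\Big|\Big(\frac{\partial^{k+\ell}\varphi^\al_\lam}{\partial x^{k}\partial t^{\ell}}\Big)(\theta x,\theta t)\Big|\le C\,|\la|^{\ell}\sum_{i=0}^{\lfloor k/2\rfloor}|\la|^{k-i}\Big(m+\tfrac{\al+1}{2}\Big)^{k-i}x^{k-2i},
\]
and the crucial algebraic observation is that $|\la|^{k-i}(m+\tfrac{\al+1}{2})^{k-i}=(\tfrac14\mathcal{N}(\la,m))^{k-i}\le\mathcal{N}(\la,m)^{k-i}$ and $|\la|^{\ell}\le\mathcal{N}(\la,m)^{\ell}$, the last inequality because $\al\ge0$ forces $\mathcal{N}(\la,m)=4|\la|(m+\tfrac{\al+1}{2})\ge2|\la|$. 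Multiplying by the Taylor prefactor $x^{k}t^{\ell}$, the top term $i=k/2$ (present exactly when $k$ is even) contributes precisely $C\,x^{k}|t|^{\ell}\,\mathcal{N}(\la,m)^{k/2+\ell}$, which is the asserted estimate; the terms with $i<k/2$ carry extra even powers $x^{2(k/2-i)}$, i.e. are of strictly higher order at the origin, and after collecting are subsumed into the degree $I+1$ sum, the odd–$k$ case being entirely analogous. The case $\la<0$ is identical, only $|\la|$ and $\operatorname{sgn}\la$ intervening.

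The step I expect to be the main obstacle is precisely this last reorganisation: verifying that the chain–rule expansion of $R_\theta$ really collapses onto the monomials $x^{k}|t|^{\ell}$, $k+\ell=I+1$, each carrying the matching weight $\mathcal{N}(\la,m)^{k/2+\ell}$, while controlling the lower–order–in–$x$ pieces produced by Lemma \ref{lemma1} — together with pinning down the sharp estimate $|(\mathcal{L}_m^\al)^{(p)}(z)|\le C(m+\tfrac{\al+1}{2})^{p}$ with the correct power of $m$, since that is exactly what forces the exponent $\tfrac{k}{2}+\ell$ and ties the whole bound to the quasi–semi–norm $\mathcal{N}$. Everything else is routine multivariable Taylor bookkeeping.
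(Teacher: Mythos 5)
Your overall route is the same as the paper's: Taylor expansion of order $I$ in $(x,t)$ with a single intermediate point, the computation $\partial_x^{k}\partial_t^{\ell}\varphi^{\alpha}_{(\lambda,m)}=(i\lambda)^{\ell}e^{i\lambda t}\,[\mathcal{L}_m^{\alpha}(|\lambda|x^2)]^{(k)}$, Lemma \ref{lemma1} for the chain rule, and the Laguerre identity $\frac{d^{i}}{dz^{i}}L^{\alpha}_{m}=(-1)^{i}L^{\alpha+i}_{m-i}$. Your explicit derivation of $|(\mathcal{L}_m^{\alpha})^{(p)}(z)|\leq C_p\,(m+\tfrac{\alpha+1}{2})^{p}$ via the ratio $L^{\alpha+i}_{m-i}(0)/L^{\alpha}_{m}(0)\leq m^{i}$ is correct and is in fact more careful than the paper, which never isolates the $m$-dependence of these derivatives.

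However, the step you yourself flag as the main obstacle is a genuine gap, and your proposed resolution does not work. After Lemma \ref{lemma1} the term of index $i<\lfloor k/2\rfloor$ contributes, once multiplied by the Taylor prefactor $x^{k}|t|^{\ell}$, a quantity of order $x^{2(k-i)}|t|^{\ell}\,\mathcal{N}(\lambda,m)^{(k-i)+\ell}$, whose homogeneous degree $2(k-i)+2\ell$ strictly exceeds $I+1$. Such a term is \emph{not} dominated by the degree-$(I+1)$ sum in the statement: already for $k=2,\ \ell=0,\ i=0$ you produce $x^{4}\mathcal{N}(\lambda,m)^{2}$, which is not controlled by $x^{2}\mathcal{N}(\lambda,m)$ when $|\lambda|x^{2}$ is large, so "subsuming it into the degree $I+1$ sum" is false. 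The reason is that your uniform bound $|(\mathcal{L}_m^{\alpha})^{(p)}(z)|\leq C(m+\tfrac{\alpha+1}{2})^{p}$ discards exactly the decay in $z$ that must absorb those surplus powers of $x$. The paper instead pairs each surplus $x^{k-2i}$ with part of $|\lambda|^{k-i}$ to form $z^{\frac{k}{2}-i}$ with $z=|\lambda|x^{2}$, and bounds the product $z^{\frac{k}{2}-i}\,(\mathcal{L}_m^{\alpha})^{(k-i)}(z)$ as a whole, using the factor $e^{-z/2}$ inside $\mathcal{L}_m^{\alpha}$ (equivalently, estimates of the type $|z^{q}\mathcal{L}_n^{\beta}(z)|\leq C(n,\beta,q)$ coming from the three-term recurrence), so that only $|\lambda|^{k/2}$ survives and every term lands on $\mathcal{N}(\lambda,m)^{\frac{k}{2}+\ell}$. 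To repair your argument you must prove a joint bound of the form $z^{q}\,|(\mathcal{L}_m^{\alpha})^{(p)}(z)|\leq C\,(m+\tfrac{\alpha+1}{2})^{p}$ (or at least with the power of $m$ needed to close the estimate) rather than taking $\sup_{z}$ of the derivative alone; without it the stated remainder estimate does not follow.
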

\begin{proof}
In order  to estimate $R_{\theta}(x,t)$ we may  write
$\varphi^{\alpha}_{(\lambda,m)}(x,t)$  in its expanded  form and
by making use of Taylor series with integral's remainder with
respect to the variables $(x,t)$ together with the  help of  Lemma
\ref{lemma1}, so we get  \small
\begin{eqnarray*}
\varphi^{\alpha}_{(\lambda,m)}(x,t)&=&\sum_{k+\ell\leq
I}\frac{x^{k}}{k!}.\frac{t^{\ell}}{\ell!} \frac{\partial^{k+\ell}}{\partial
x^{k}\partial t^{\ell}}\varphi^{\alpha}_{(\lambda,m)}(0,0)~~+
\sum_{k+\ell= I+1}\frac{x^{k}}{k!}.\frac{t^{\ell}}{\ell!}
\frac{\partial^{k+\ell}}{\partial x^{k}\partial
t^{\ell}}\varphi^{\alpha}_{(\lambda,m)}(\theta x,\theta
t)\\
&=& \sum_{k+\ell\leq
I}x^{k}.t^{\ell}\underbrace{\frac{1}{k!}\frac{1}{\ell!} \lambda^{\ell}
\varphi^{\alpha}_{(\lambda,m)}(0,0)}_{w_{k,\ell}(\lambda,\alpha,m)}
~~~+~~R_{\theta}(x,t).
\end{eqnarray*}
\noindent A direct calculation yields
\begin{eqnarray*}
\frac{\partial^{k+\ell}}{\partial x^{k}\partial
t^{\ell}}\Big[\varphi^{\alpha}_{(\lambda,m)}(x,t)\Big] &=&
(i\la)^\ell e^{i\la t}[\mathcal{L}_m^\al(|\la|x^2)]^{(k)}.
\end{eqnarray*}
To compute  $[\mathcal{L}_m^\al(|\la|x^2)]^{(k)}$ we may use
Lemma \ref{lemma1}, for $k$  being even integer, (An analogous
result  follows for $k$  being odd integer), so we obtain
$$[\mathcal{L}_m^\al(|\la|x^2)]^{(k)}=
\sum_{j=0}^{{k\over2}}\al_j|\la|^
{k-j}x^{k-2j}[\mathcal{L}_m^\al]^{(k-j)}(|\la|x^2).$$
Now using the fact that
$\mathcal{L}_m^\al(z)=e^{z/2}{L_m^\al(z)\over L_{m}^{\al}(0)}$ and
Leibnitz's formula for the $(k-j)$'th derivative of a product
together with recurrence identity for Laguerre polynomials $
\frac{\partial^{n}}{\partial z^{n}} L^{\alpha}_{m}(z) =
(-1)^{n}L^{\alpha +n}_{m-n}(z)$ we obtain
$$[\mathcal{L}_m^\al(z)]^{(k-j)}=
\sum_{s=0}^{k-j}C_{k,s}e^{-z/2}{L_{m-s}^{\al+s}(z)\over L_{m}^{\al}(0)}.$$
Hence, it follows that $|\la
x^2|^{{k\over2}-j}[\mathcal{L}_m^\al]^{(k-j)}(|\la|x^2)$ is
bounded, and thus
\begin{eqnarray*}
|[\mathcal{L}_m^\al(|\la|x^2)]^{(k)}|&\leq&\Bigg|
\sum_{j=0}^{{k\over2}}\al_j|\la|^{k/2}
\underbrace{|\la
 x^2|^{{k\over2}-j}[\mathcal{L}_m^\al]^{(k-j)}(|\la|x^2)}_{bounded}\Bigg|\\
 &\leq& C\sum_{j=0}^{{k\over2}}\al_j|\la|^{k/2}\\
 &\leq&C|\la|^{k/2}.
\end{eqnarray*}
So, it turns out
\begin{eqnarray*}
\Bigg|\frac{\partial^{k+\ell}}{\partial x^{k}\partial
t^{\ell}}\varphi^{\alpha}_{(\lambda,m)}(x,t)\Bigg|
\leq C|\la|^{k/2+\ell}
&\leq&C|\la|^{k/2+\ell}(m+{\al+1\over2})^{k/2+\ell}\\
&=&C{\mathcal{N}}(\la,m)^{k/2+\ell},
\end{eqnarray*}
and hence
\begin{eqnarray*}
|R_{\theta}(x,t)|&\leq &C~\sum_{k+\ell=
I+1}x^{k}~|t|^{\ell}\mathcal{N}(\lambda,m)^{\frac{k}{2}+\ell}.
\end{eqnarray*}
The proof is completed.
\end{proof}
\subsection{Fourier--Laguerre transform on $H^{p}(\mathbb{K})$}

The purpose of this section is to study the Fourier--Laguerre transform
on the Laguerre--Hardy type space $H^p(\mathbb{K}).$ Our results can be seen as an extension of celebrated properties of the Fourier transform
on classical Hardy spaces (see \cite{COW0,COW,GR,TW}).

Obviously, if $g\in \mathcal{S}(\mathbb{K}),$ and
$f=\sum_{k}\beta_k a_k$ is $a (p,2,s)$ atom--decompositional
expression of $f,$ then
$$\big\langle f,\mathcal{F}(g)\big \rangle =\lim_{n \rightarrow \infty}
\Big\langle \sum_{k}^{n}\beta_k a_k, \mathcal{F}(g)\Big\rangle$$
exists. Note that $a_k\in L^{2}_{\alpha}(\mathbb{K}),$ and
$$\Big\langle \sum_{k}^{n}\beta_k a_k, \mathcal{F}(g)\Big\rangle =
\Big\langle \sum_{k}^{n}\beta_k \widehat{a_k}, g\Big\rangle. $$
Then $\mathcal{F}(f)\in \mathcal{S}'(\mathbb{K})$ if we define
$\mathcal{F}(f)=\sum_{k}\beta_k\widehat{a_k}.$ In the following, we shall further point out that the series $\sum_{k}\beta_k\widehat{a_k}$ is absolutely convergent, and $\mathcal{F}(f)$ defined above is actually a continuous function.
In order to prove this results, we need to establish the following Lemma, which is a generalization of \cite[Theorem III.7.20]{GR}.
\begin{lemma}\label{lemma4.4}
Let $0<p\leq 1,$ and let $a(p,2,s)-$atom supported on a $B(e,r)$ centred at the orgin. Set $d=\frac{1}{1/p-1/2}=2p/(2-p).$ Then there exists a constant $C$ independent of $a$ such that
\begin{itemize}
\item[(i)] For every $0\leq I\leq s,$
\begin{equation}\label{4.4.6}
|\mathbf{D}^{I}_{\lambda}\widehat{a}(\lambda,m)|\leq
C~\mathcal{N}(\lambda,m)^{\frac{s-d(I)+1}{2}}\|a\|_{L^{2}_{\alpha}(\mathbb{K})}
^{1-d\big(\frac{s+1}{Q}+\frac{1}{2}\big)}.
\end{equation}
\item[(ii)]  For every $0\leq I\leq s$ and every $1\leq q \leq\infty,$
\begin{equation}\label{4.4.66}
\|~|~\mathbf{D}^{I}_{\lambda}\widehat{a}~|
    ^{2}~\|_{L^{q'}_{\alpha}(\widehat{\mathbb{K}})} \leq C
\|a\|^{2-d(\frac{2d(I)}{Q}+\frac{1}{q})}_{L^{2}_{\alpha}(\mathbb{K})}.
\end{equation}
\end{itemize}
Here $\frac{1}{q}+\frac{1}{q'}=1.$
\end{lemma}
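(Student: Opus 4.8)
The plan is to prove the two parts separately: part~(i) is the substantive one, and part~(ii) will follow from it together with the crude bound (\ref{estimate2}) of Lemma~\ref{lemma2} by a simple interpolation. Write $d=(1/p-1/2)^{-1}$ and abbreviate $\|a\|_{2}=\|a\|_{L^{2}_{\alpha}(\mathbb K)}$; the size condition on the atom, $\|a\|_{2}\le C\,r^{Q(1/2-1/p)}$, together with $Q(1/2-1/p)<0$, will be used repeatedly in the form $r\le C\,\|a\|_{2}^{-d/Q}$. For part~(i) one starts from (\ref{Fourier}) and differentiates under the integral sign (legitimate since $a$ is compactly supported and $\lambda\mapsto\varphi^{\alpha}_{(-\lambda,m)}(x,t)$ is smooth), obtaining
\[
\mathbf{D}^{I}_{\lambda}\widehat a(\lambda,m)=\int_{B(e,r)}\mathbf{D}^{I}_{\lambda}\big[\varphi^{\alpha}_{(-\lambda,m)}(x,t)\big]\,a(x,t)\,dm_{\alpha}(x,t).
\]
Now one invokes the cancellation condition: since $\int_{\mathbb K}P\,a\,dm_{\alpha}=0$ for every $P\in\mathcal{P}_{s}$, one may subtract from $\mathbf{D}^{I}_{\lambda}\varphi^{\alpha}_{(-\lambda,m)}(x,t)$ its Taylor polynomial in $(x,t)$ at the origin of homogeneous degree $\le s$ without changing the integral. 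The resulting remainder is estimated by combining the explicit formula (\ref{estimate4}) --- which writes $\mathbf{D}^{I}_{\lambda}\varphi^{\alpha}_{(-\lambda,m)}$ as a finite combination of monomials $x^{2j}t^{I-j}$, all of homogeneous degree $d(I)$, times characters $\varphi^{\alpha+i}_{(-\lambda,m-i)}$ --- with the Taylor expansion of Proposition~\ref{p3.3.10} applied to each such character, truncated at degree $s-d(I)$. Since each extra unit of homogeneous degree carried into a character's remainder produces a factor $\mathcal{N}(\lambda,m-i)^{1/2}\le\mathcal{N}(\lambda,m)^{1/2}$, the full remainder is $\le C\,\mathbf{N}(x,t)^{s+1}\,\mathcal{N}(\lambda,m)^{(s-d(I)+1)/2}$ on $B(e,r)$. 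Plugging this in, using $\mathbf{N}(x,t)<r$ there, Cauchy--Schwarz, and $m_{\alpha}(B(e,r))=C_{Q}r^{Q}$, gives $|\mathbf{D}^{I}_{\lambda}\widehat a(\lambda,m)|\le C\,\mathcal{N}(\lambda,m)^{(s-d(I)+1)/2}\,r^{\,s+1+Q/2}\,\|a\|_{2}$; finally $r\le C\,\|a\|_{2}^{-d/Q}$ (the exponent $s+1+Q/2$ being positive) turns $r^{\,s+1+Q/2}$ into $\|a\|_{2}^{-d((s+1)/Q+1/2)}$, which is exactly (\ref{4.4.6}).

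For part~(ii) the plan is to interpolate between the endpoints $q=\infty$ and $q=1$ via the elementary bound $\|F\|_{L^{q'}}\le\|F\|_{L^{1}}^{1/q'}\,\|F\|_{L^{\infty}}^{1/q}$ applied to $F=|\mathbf{D}^{I}_{\lambda}\widehat a|^{2}\ge0$. The case $q=1$ ($q'=\infty$) is immediate from (\ref{estimate2}): Cauchy--Schwarz gives $|\mathbf{D}^{I}_{\lambda}\widehat a(\lambda,m)|\le C\int_{B(e,r)}\mathbf{N}(x,t)^{d(I)}|a|\,dm_{\alpha}\le C\,r^{\,d(I)+Q/2}\,\|a\|_{2}$, and $r\le C\,\|a\|_{2}^{-d/Q}$ then yields $\||\mathbf{D}^{I}_{\lambda}\widehat a|^{2}\|_{L^{\infty}}\le C\,\|a\|_{2}^{\,2-d(2d(I)/Q+1)}$. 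The case $q=\infty$ ($q'=1$) is $\||\mathbf{D}^{I}_{\lambda}\widehat a|^{2}\|_{L^{1}(\widehat{\mathbb K})}=\|\mathbf{D}^{I}_{\lambda}\widehat a\|_{L^{2}(\widehat{\mathbb K})}^{2}$, and here one again uses (\ref{estimate4}) to express $\mathbf{D}^{I}_{\lambda}\widehat a$ as a finite linear combination of Fourier--Laguerre transforms (of appropriate orders) of functions $(x,t)^{J}a$ with $d(J)\le d(I)$, so that Plancherel's formula together with $|(x,t)^{J}|\le C\,\mathbf{N}(x,t)^{d(I)}\le C\,r^{\,d(I)}$ on $B(e,r)$ gives $\|\mathbf{D}^{I}_{\lambda}\widehat a\|_{L^{2}(\widehat{\mathbb K})}\le C\,r^{\,d(I)}\|a\|_{2}$, hence $\||\mathbf{D}^{I}_{\lambda}\widehat a|^{2}\|_{L^{1}}\le C\,\|a\|_{2}^{\,2-2d\,d(I)/Q}$. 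A short computation then shows that these two endpoint exponents combine under the displayed interpolation inequality into $2-d(2d(I)/Q+1/q)$, which is (\ref{4.4.66}).

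The main obstacle is twofold. In part~(i) the delicate point is to pin down the exact power of $\mathcal{N}(\lambda,m)$ in the Taylor remainder: one must verify, by tracking (\ref{estimate4}) and Proposition~\ref{p3.3.10}, that the $I$-fold $\lambda$-derivative contributes $d(I)$ units of homogeneity in $(x,t)$ but no power of $\lambda$, so that only the remaining $s-d(I)+1$ units produced by expanding the characters yield the factor $\mathcal{N}(\lambda,m)^{(s-d(I)+1)/2}$, and that the $m$-dependent coefficients in (\ref{estimate4}) are controlled as in Lemma~\ref{lemma2}. In the $q=\infty$ endpoint of part~(ii) the characters in (\ref{estimate4}) carry a shifted order $\alpha+i$, so a clean application of Plancherel's formula requires either comparing the measures $d\gamma_{\alpha}$ and $d\gamma_{\alpha+i}$ (using $x^{2}\,dm_{\alpha+i-1}=(\alpha+i)\,dm_{\alpha+i}$ and the explicit ratio of $L^{\alpha}_{m}(0)$ to $L^{\alpha+i}_{m-i}(0)$) or recasting the $\lambda$-derivative through a differential--difference relation for Laguerre functions so as to remain within a single order; once this bookkeeping is settled the remaining estimates are routine.
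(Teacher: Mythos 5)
Your proposal is correct and follows essentially the same route as the paper: for (i), cancellation against the degree-$(s-d(I))$ Taylor polynomial of the character supplied by Proposition \ref{p3.3.10}, then the bound $\mathbf{N}(x,t)^{s+1}\mathcal{N}(\lambda,m)^{(s-d(I)+1)/2}$ on the remainder, Cauchy--Schwarz, and $r\leq C\|a\|_{L^{2}_{\alpha}(\mathbb{K})}^{-d/Q}$; for (ii), the endpoints $q'=1$ (Plancherel) and $q'=\infty$ ($L^{1}\to L^{\infty}$ boundedness of the transform) combined by exactly the log-convexity inequality you state. The only divergence is that you read $\mathbf{D}^{I}_{\lambda}$ as a literal $\lambda$-derivative and must then handle the shifted orders $\alpha+i$ appearing in (\ref{estimate4}) when applying Plancherel (the bookkeeping you flag as the main obstacle), whereas the paper's proof simply treats $\mathbf{D}^{I}_{\lambda}\widehat{a}$ as $\widehat{(x,t)^{I}a}$ --- as its identity $\int_{\widehat{\mathbb{K}}}|\mathbf{D}^{I}_{\lambda}\widehat{a}|^{2}d\gamma_{\alpha}=\int_{\mathbb{K}}|(x,t)^{I}a|^{2}dm_{\alpha}$ makes explicit --- so that issue never arises.
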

\begin{proof}
$(i)$  Let $a$ be $a(p, 2, s)$--atom with $Supp~a\subset B(e,r).$ By the size condition of $a,$ we know that
$m_{\alpha}B(e,r)\leq \| a\|^{\frac{1}{1/p-1/2}}_{L^{2}_{\alpha}(\mathbb{K})}= \| a\|^{-d}_{L^{2}_{\alpha}(\mathbb{K})},$ and
$ m_\al B(e,r)=\int_{B(e,r)}dm_\al(x,t)=C~r^{Q},$ one has $\|a\|_{L^{2}_{\alpha}(\mathbb{K})}\leq C~ r^{-\frac{Q}{d}};$ that is,
\begin{equation}
r=C ~m_{\alpha}B(e,r)^{Q}\leq C~ \|a\|^{-\frac{d}{Q}}_{L^{2}_{\alpha}(\mathbb{K})}.
\end{equation}
By the cancelation property of atom,
\begin{eqnarray*}
\mathbf{D}_{\lambda}^{I}\widehat{a}(\lambda,m)&=&\int_{\mathbb{K}}
(x,t)^{I}a(x,t)\big[\varphi^{\alpha}_{(\lambda,m)}(x,t)-
P(x,t)\big]dm_{\alpha}(x,t)\\
&=&\int_{B(e,r)}
(x,t)^{I}a(x,t)\big[\varphi^{\alpha}_{(\lambda,m)}(x,t)-
P(x,t)\big]dm_{\alpha}(x,t),
\end{eqnarray*}
where $$ P(x,t)=\sum_{k+\ell\leq s-d(I)}\omega_{k,\ell}~
x^{k}~t^{\ell}~\mathcal{N}(\lambda,m)^{k/2+\ell}.$$
Therefore by Proposition \ref{p3.3.10}, the estimate for the remainder in Taylor's formula yields;
\begin{eqnarray*}
|\mathbf{D}^{I}_{\lambda}\widehat{a}(\lambda,m)|
&\leq&\sum_{k+\ell=s-d(I)+1}\omega_{k,\ell}~\mathcal{N}(\lambda,m)^{k/2+\ell}
\int_{\mathbb{K}}~
x^{k}~t^{\ell}~|a(x,t)|dm_{\alpha}(x,t)\\
&\leq&\sum_{k+\ell=s-d(I)+1}\omega_{k,\ell}~\mathcal{N}(\lambda,m)^{k/2+\ell}
\int_{\mathbb{K}}(x,t)^{k+\ell}~|a(x,t)|dm_{\alpha}(x,t)\\
&\leq&\sum_{k+\ell=s-d(I)+1}\omega_{k,\ell}~\mathcal{N}(\lambda,m)^{k/2+\ell}
\int_{\mathbb{K}}\mathbf{N}(x,t)^{k+\ell}~|a(x,t)|dm_{\alpha}(x,t)\\
&\leq&\sum_{k+\ell=s-d(I)+1}\omega_{k,\ell}~\mathcal{N}(\lambda,m)^{k/2+\ell}
r^{s+1}\int_{B(e,r)}|a(x,t)|dm_{\alpha}(x,t)\\
&\leq&\sum_{k+\ell=s-d(I)+1}\omega_{k,\ell}~\mathcal{N}(\lambda,m)^{k/2+\ell}
r^{s+1}\|a\|_{L^{1}_{\alpha}(\mathbb{K})}\\
&\leq&\sum_{k+\ell=s-d(I)+1}\omega_{k,\ell}~\mathcal{N}(\lambda,m)^{k/2+\ell}
r^{s+1}m_{\alpha}B(e,r)^{\frac{1}{2}}
\|a\|_{L^{2}_{\alpha}(\mathbb{K})}\\
&\leq&\sum_{k+\ell=s-d(I)+1}\omega_{k,\ell}~\mathcal{N}(\lambda,m)^{k/2+\ell}
\|a\|^{1-d\big(\frac{s+1}{Q}+\frac{1}{2}\big)}
_{L^{2}_{\alpha}(\mathbb{K})}\\
&\leq&~C~\mathcal{N}(\lambda,m)^{\frac{s-d(I)+1}{2}}
\|a\|^{1-d\big(\frac{s+1}{Q}+\frac{1}{2}\big)}_{L^{2}_{\alpha}(\mathbb{K})}.
\end{eqnarray*}
$(ii)$ First we deal with the cases $q' = 1$ and $q = \infty,$ and then we use interpolation.\\ \\
$\bullet $ When $q'=1,$ we have $q=\infty,$  by using the
$L^{1}_{\alpha}, ~L^{\infty}_{\alpha}$ boundedness of the
Fourier--Laguerre  transform.
\begin{eqnarray}\label{4.4}
\int_{\widehat{\mathbb{K}}}|\mathbf{D}^{I}_{\lambda}\widehat{a}
(\lambda,m)|^{2}d\gamma_{\alpha}(\lambda,m)&=&
\int_{\mathbb{K}}|a(x,t)|^{2}|(x,t)^{I}|^{2}dm_{\alpha}(x,t)\nonumber\\
&\leq& C r^{d(I)}\|a\|^{2}_{L^{2}_{\alpha}(\mathbb{K})}\nonumber\\
&=&C~m_{\alpha}B(e,r)^{-\frac{2d(I)}{Q}}\|a\|^{2}_{L^{2}_{\alpha}
(\mathbb{K})}\nonumber\\
&\leq &C\|a\|^{2-2d\,\frac{d(I)}{Q}}_{L^{2}_{\alpha}(\mathbb{K})}.
\end{eqnarray}
$\bullet $ If $q'=\infty,$ we have $q=1$ we use Plancherel's theorem to obtain
\begin{eqnarray}\label{4.5}
|\mathbf{D}^{I}_{\lambda}\widehat{a}(\lambda,m)|^{2}&=& |\widehat{(x,t)^{I}a(x,t)}|^{2}\leq C\|(x,t)^{I}a(x,t)\|^{2}_{L^{1}_{\alpha}(\mathbb{K})}\nonumber\\
&\leq&  C r^{2d(I)}\|a\|^{2}_{L^{1}_{\alpha}(\mathbb{K})}\nonumber\\
&\leq&  C r^{2d(I)}m_{\alpha}B(e,r)\|a\|^{2}_{L^{2}_{\alpha}(\mathbb{K})}\nonumber\\
&\leq&  C \|a\|^{2-d(\frac{2d(I)}{Q}+1)}_{L^{2}_{\alpha}(\mathbb{K})}.
\end{eqnarray}
$\bullet $ Now, for $1<q'<\infty,$  and  by (\ref{4.4})--(\ref{4.5}),  we write
\begin{eqnarray}\label{4.6}
\int_{\widehat{\mathbb{K}}}|\mathbf{D}^{I}_{\lambda}
\widehat{a}(\lambda,m)|^{2q'}d\gamma_{\alpha}(\lambda,m)&=&
\int_{\widehat{\mathbb{K}}}|\mathbf{D}^{I}_{\lambda}
\widehat{a}(\lambda,m)|^{2}
|\mathbf{D}^{I}_{\lambda}\widehat{a}(\lambda,m)|^
{2q'-2}d\gamma_{\alpha}(\lambda,m)\nonumber\\
&\leq &C~\|\mathbf{D}^{I}_{\lambda}\widehat{a}\|^{2}_{L^{2}_{\alpha}(\mathbb{K})}
\|~|\mathbf{D}^{I}_{\lambda}\widehat{a}
|^{2(q'-1)}\|_{L^{\infty}_{\alpha}(\mathbb{K})}\nonumber\\
&\leq &C~\|a\|^{q'\big(2-d(\frac{2d(I)}{Q}+\frac{1}{q})\big)}_{L^{2}_{\alpha}
(\mathbb{K})}.
\end{eqnarray}
\end{proof}
\begin{lemma}\label{lemma4.5}
If $a(p,q,s)$--atom then the Fourier--Laguerre transform of $a$ is continuous and satisfies
\begin{equation}\label{fourier a}
|\widehat{a}(\lambda,m)|\leq C~\mathcal{N}(\lambda,m)^{\frac{Q}{2}(\frac{1}{p}-1)}
\end{equation}
\end{lemma}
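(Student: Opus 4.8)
The plan is to establish the two assertions separately. Continuity of $\widehat a$ is immediate: since $a\in L^q_\alpha(\mathbb{K})$ has compact support it lies in $L^1_\alpha(\mathbb{K})$, and because $\lambda\mapsto\varphi^\alpha_{(-\lambda,m)}(x,t)$ is continuous with $|\varphi^\alpha_{(-\lambda,m)}(x,t)|\le 1$, the dominated convergence theorem applied to $\widehat a(\lambda,m)=\int_{\mathbb{K}}\varphi^\alpha_{(-\lambda,m)}(x,t)\,a(x,t)\,dm_\alpha(x,t)$ shows $\widehat a$ is continuous on $\widehat{\mathbb{K}}$. For the size estimate (\ref{fourier a}) I would combine the derivative bound of Lemma \ref{lemma4.4}(i) with the trivial inequality (\ref{33}) and interpolate so that the factor $\|a\|_{L^2_\alpha(\mathbb{K})}$ drops out; by the Remarks of \S 3 we may take $q=2$.

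Let $B(e,r)$ be the supporting ball of $a$, set $A:=\|a\|_{L^2_\alpha(\mathbb{K})}$ and $d:=\frac{1}{1/p-1/2}=\frac{2p}{2-p}$. Lemma \ref{lemma4.4}(i) applied with $I=0$ (so $d(I)=0$), after writing the exponent of $A$ as $-\theta$ with $\theta:=d\left(\frac{s+1}{Q}+\frac12\right)-1$, reads
\begin{equation}\label{pl1}
|\widehat a(\lambda,m)|\le C\,\mathcal{N}(\lambda,m)^{\frac{s+1}{2}}\,A^{-\theta},
\end{equation}
and here $\theta>0$, because the admissibility hypothesis $s\ge[Q(1/p-1)]$ forces $s+1>Q(1/p-1)$, i.e.\ $\frac{s+1}{Q}>\frac1p-1=\frac1d-\frac12$. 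On the other hand, by H\"older's inequality, then the size condition $\|a\|_{L^2_\alpha(\mathbb{K})}\le m_\alpha B(e,r)^{1/2-1/p}$, together with the volume identity $m_\alpha B(e,r)=C\,r^Q$ (which yields $m_\alpha B(e,r)\le A^{-d}$), we obtain
\begin{equation}\label{pl2}
|\widehat a(\lambda,m)|\le\|a\|_{L^1_\alpha(\mathbb{K})}\le m_\alpha B(e,r)^{1/2}\,A\le A^{\,1-d/2},
\end{equation}
with $1-\frac d2=\frac{2-2p}{2-p}>0$ for $0<p<1$ (the case $p=1$ being already contained in (\ref{pl2})).

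Now interpolate: for $0<\eta<1$, writing $|\widehat a(\lambda,m)|=|\widehat a(\lambda,m)|^{1-\eta}|\widehat a(\lambda,m)|^{\eta}$ and using (\ref{pl1}) for the first factor and (\ref{pl2}) for the second gives
$$|\widehat a(\lambda,m)|\le C\,\mathcal{N}(\lambda,m)^{(1-\eta)\frac{s+1}{2}}\,A^{\,-\theta(1-\eta)+(1-d/2)\eta}.$$
The choice $\eta=\frac{\theta}{\theta+1-d/2}\in(0,1)$ annihilates the power of $A$. The computation that makes everything fit is the identity $\theta+1-\frac d2=\frac{d(s+1)}{Q}$, which gives $1-\eta=\frac{Q(1-d/2)}{d(s+1)}$ and hence an exponent of $\mathcal{N}(\lambda,m)$ equal to $(1-\eta)\frac{s+1}{2}=\frac{Q(2-d)}{4d}=\frac Q2\left(\frac1p-1\right)$, the last equality being $\frac{2-d}{4d}=\frac1{2d}-\frac14=\frac12\left(\frac1p-\frac12\right)-\frac14=\frac12\left(\frac1p-1\right)$. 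This is precisely (\ref{fourier a}).

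The argument is essentially bookkeeping once Lemma \ref{lemma4.4} is granted; the one thing to notice is that the exponent of $\|a\|_{L^2_\alpha}$ in Lemma \ref{lemma4.4}(i) is negative, so that estimate is useless on its own, while the crude bound (\ref{pl2}) carries a positive power of the same quantity, and the two interpolate, the $s$-dependence cancelling exactly. If one wanted to avoid Lemma \ref{lemma4.4} altogether, the alternative is to split $\widehat{\mathbb{K}}$ according to whether $\mathcal{N}(\lambda,m)r^2\ge 1$ or $\le 1$: on the first region (\ref{pl2}), rewritten as $|\widehat a(\lambda,m)|\le C\,r^{-Q(1/p-1)}$, already yields the claim since $\frac Q2(\frac1p-1)\ge0$, and on the second region one subtracts from $\varphi^\alpha_{(-\lambda,m)}$ its Taylor polynomial of homogeneous degree $\le s$ and invokes Proposition \ref{p3.3.10} together with the moment condition on $a$. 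The delicate point in that second approach is that, since $\varphi^\alpha_{(-\lambda,m)}$ depends on $x$ only through $x^2$, the Taylor expansion should be carried out anisotropically in the variables $(x^2,t)$: only then is the subtracted polynomial genuinely in $\mathcal{P}_s$ and the remainder of the right order in $\mathcal{N}(\lambda,m)$, an isotropic expansion in $(x,t)$ losing roughly half that order and falling short.
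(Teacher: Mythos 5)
Your proposal is correct and follows essentially the same route as the paper: both rest on the two estimates from Lemma \ref{lemma4.4} with $I=0$ (the bound $C\,\mathcal{N}(\lambda,m)^{(s+1)/2}\|a\|_{L^2_\alpha}^{1-d(\frac{s+1}{Q}+\frac12)}$ with negative exponent and the crude bound $\|a\|_{L^2_\alpha}^{1-d/2}$ with nonnegative exponent), combined so that the $\|a\|_{L^2_\alpha}$ factor disappears. The only cosmetic difference is that the paper splits $\widehat{\mathbb{K}}$ into the two regions $\mathcal{N}(\lambda,m)^{Q/2}\lessgtr\|a\|^d_{L^2_\alpha}$ and uses whichever estimate is better there, whereas you take the weighted geometric mean with the weight chosen to cancel the power of $\|a\|_{L^2_\alpha}$; the two devices give the identical exponent $\frac{Q}{2}(\frac1p-1)$, and your explicit dominated-convergence argument for continuity is a welcome addition the paper leaves implicit.
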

\begin{proof}
Given $a (p,2,s)$--atom a centered at the origin $e\in\mathbb{K}.$ We use the preceding Lemma \ref{lemma4.4} with
setting $I = 0.$ Equation (\ref{4.4.6}) reduces to the following estimate

\begin{equation}\label{PW-F(a)2}
|\widehat{a}(\lambda,m)|\leq
C~\mathcal{N}(\lambda,m)^{\frac{s+1}{2}}\|a\|_{L^{2}_{\alpha}(\mathbb{K})}
^{1-d\big(\frac{s+1}{Q}+\frac{1}{2}\big)}.
\end{equation}
Also, equation (\ref{4.4.66}) with $q=1,$ yields
\begin{equation}
|\widehat{a}(\lambda,m)|^{2}\leq C
\|a\|^{2-d}_{L^{2}_{\alpha}(\mathbb{K})}.
\end{equation}
Observe that, in the first estimate, the exponent of $\|a\|_{L^{2}_{\alpha}(\mathbb{K})}$ negative, whereas in the second estimate, the exponent of $\|a\|_{L^{2}_{\alpha}(\mathbb{K})}$ is positive, because $d=2p/(2-p) \leq 2.$\\
Now, combining both estimates we obtain a better one.
We use the first one where \linebreak
$\mathcal{N}(\lambda,m)^{\frac{s+1}{2}}\|a\|_{L^{2}_{\alpha}(\mathbb{K})}
^{1-d\big(\frac{s+1}{Q}+\frac{1}{2}\big)}\leq
\|a\|^{1-d/2}_{L^{2}_{\alpha}(\mathbb{K})}$ or, equivalently,
where $\mathcal{N}(\lambda,m)^{\frac{Q}{2}}\leq  \|a\|^{d}_{L^{2}_{\alpha}(\mathbb{K})}.$
In the rest, we use the second estimate.\\
$\bullet$ If $\mathcal{N}(\lambda,m)^{\frac{Q}{2}}\leq  \|a\|^{d}_{L^{2}_{\alpha}(\mathbb{K})},$
we get
\begin{eqnarray*}
|\widehat{a}(\lambda,m)|&\leq & C\mathcal{N}(\lambda,m)^{\frac{s+1}{2}}
\|a\|_{L^{2}_{\alpha}(\mathbb{K})}^{1-d\big(\frac{s+1}{Q}+\frac{1}{2}\big)}\\
&\leq&C \mathcal{N}(\lambda,m)^{\frac{s+1}{2}+
\frac{Q}{2d}\big(1-d(\frac{s+1}{Q}+\frac{1}{2})\big)}\\
&\leq&C \mathcal{N}(\lambda,m)^{\frac{Q}{2}(\frac{1}{p}-1)}.
\end{eqnarray*}
$\bullet$ If $\mathcal{N}(\lambda,m)^{\frac{Q}{2}}> \|a\|^{d}_{L^{2}_{\alpha}(\mathbb{K})},$ we obtain
\begin{eqnarray*}
|\widehat{a}(\lambda,m)|&\leq & C\|a\|^{1-d/2}_{L^{2}_{\alpha}(\mathbb{K})}\\
&\leq & C~\mathcal{N}(\lambda,m)^{\frac{Q}{2d}(1-d/2)}\\
&\leq& C~\mathcal{N}(\lambda,m)^{\frac{Q}{2}(\frac{1}{p}-1)}.
\end{eqnarray*}
\end{proof}\\
From the above Lemma, we can deduce an estimation on the Fourier--Laguerre transforms.
\begin{theorem}\label{theorem6.6}
Let $f \in H^p(\mathbb{K}),~ 0 < p \leq 1.$
Then the Fourier--Laguerre transform $\mathcal{F}(f)$  of $f$ which always makes sense as a tempered distribution, is actually a continuous function satisfying the estimate.
\begin{equation}\label{(4.10)}
|\mathcal{F}(f)(\lambda,m)|\leq C \|f\|_{H^p(\mathbb{K})}
\mathcal{N}(\lambda,m)^{\frac{Q}{2}(\frac{1}{p}-1)}
\end{equation}
with $C$ independent of $f.$
\end{theorem}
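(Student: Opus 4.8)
The plan is to deduce the theorem from the single‑atom estimate of Lemma~\ref{lemma4.5} together with the atomic decomposition of $H^p(\mathbb{K})$ and the elementary inequality $\sum_k|\beta_k|\le\big(\sum_k|\beta_k|^p\big)^{1/p}$, which holds precisely because $0<p\le1$. Given $f\in H^p(\mathbb{K})$, I would first fix an atomic decomposition $f=\sum_{k}\beta_k a_k$ into $(p,2,s)$--atoms centred at the origin, with $s\ge[Q(1/p-1)]$ and $\big(\sum_k|\beta_k|^p\big)^{1/p}\le C\|f\|_{H^p(\mathbb{K})}$; such a decomposition exists by the atomic decomposition of $H^p$, and as recalled just before the statement one has $\mathcal{F}(f)=\sum_k\beta_k\widehat{a_k}$ in $\mathcal{S}'$ (this being legitimate since $\sum_k\beta_k a_k\to f$ in $\mathcal{S}'$ and $\mathcal{F}$ is continuous on $\mathcal{S}'$).

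For the pointwise bound, Lemma~\ref{lemma4.5} gives $|\widehat{a_k}(\lambda,m)|\le C\,\mathcal{N}(\lambda,m)^{\frac{Q}{2}(\frac1p-1)}$ with $C$ independent of $k$, so the series $\sum_k\beta_k\widehat{a_k}(\lambda,m)$ converges absolutely for every $(\lambda,m)\in\widehat{\mathbb{K}}$ and
\begin{equation*}
\Big|\sum_k\beta_k\widehat{a_k}(\lambda,m)\Big|\le C\,\mathcal{N}(\lambda,m)^{\frac{Q}{2}(\frac1p-1)}\sum_k|\beta_k|\le C\,\|f\|_{H^p(\mathbb{K})}\,\mathcal{N}(\lambda,m)^{\frac{Q}{2}(\frac1p-1)}.
\end{equation*}

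It then remains to see that this sum is a continuous function which coincides with the distribution $\mathcal{F}(f)$. On any compact $K\subset\widehat{\mathbb{K}}$ the quasi--semi--norm $\mathcal{N}$ is bounded, hence $|\widehat{a_k}|\le C_K$ uniformly on $K$, and by the Weierstrass $M$--test $\sum_k\beta_k\widehat{a_k}$ converges uniformly on $K$; since each $\widehat{a_k}$ is continuous (Lemma~\ref{lemma4.5}), the sum defines a continuous function $g$ on $\widehat{\mathbb{K}}$ satisfying the estimate above, i.e.\ \eqref{(4.10)}. To identify $g$ with $\mathcal{F}(f)$, note that the partial sums $S_N=\sum_{k\le N}\beta_k\widehat{a_k}$ converge to $\mathcal{F}(f)$ in $\mathcal{S}'$ by construction, while for a test function $\psi\in\mathcal{S}(\widehat{\mathbb{K}})$ dominated convergence — with dominating function $C\big(\sum_k|\beta_k|\big)\mathcal{N}(\lambda,m)^{\frac{Q}{2}(\frac1p-1)}|\psi|$, which lies in $L^1(d\gamma_{\alpha})$ because $\psi$ is Schwartz — yields $\int S_N\,\psi\,d\gamma_{\alpha}\to\int g\,\psi\,d\gamma_{\alpha}$; comparing the two limits gives $\mathcal{F}(f)=g$.

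The genuinely substantive estimates have already been carried out in Proposition~\ref{p3.3.10} and Lemmas~\ref{lemma4.4}--\ref{lemma4.5}, so no serious obstacle remains. The two points needing a little care are: replacing $\sum_k|\beta_k|^p$ by $\sum_k|\beta_k|$ via the $\ell^p\hookrightarrow\ell^1$ inequality, which is exactly what produces the exponent $\tfrac1p$ and the linear (rather than $p$--th power) dependence on $\|f\|_{H^p(\mathbb{K})}$; and the bookkeeping in the last paragraph showing that the locally uniform limit of the atomic Fourier series is the same object as the a priori distributional $\mathcal{F}(f)$. I expect the latter identification to be the most delicate part of the write‑up, though it is routine.
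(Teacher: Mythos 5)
Your proposal is correct and follows essentially the same route as the paper: atomic decomposition, the single--atom bound of Lemma \ref{lemma4.5}, the $\ell^p\hookrightarrow\ell^1$ inequality to control $\sum_k|\beta_k|$ by $\|f\|_{H^p(\mathbb{K})}$, and local uniform convergence of $\sum_k\beta_k\widehat{a_k}$ to obtain continuity. The only difference is that you spell out the identification of the locally uniform limit with the distributional $\mathcal{F}(f)$ via dominated convergence against test functions, a step the paper leaves implicit.
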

\begin{proof}
Let $f\in H^{p}(\mathbb{K}).$ By the atomic decomposition of $H^{p}(\mathbb{K}),$ we can find coefficients $(\beta_k)$ and atoms $(a_k)$ such that $f=\sum_{k}\beta_{k}a_{k}$ (in $H^p(\mathbb{K})$--norm) and $\sum_{k}|\beta_{k}|^p\leq C\|f\|_{H^p(\mathbb{K})}.$
This sum converges in $H^p$--norm, which implies convergence in $\mathcal{S}'(\mathbb{K}).$ So by taking the Fourier--Laguerre transform on $f,$ we have $\mathcal{F}(f)(\lambda,m)=\sum_{k}\beta_{k}\widehat{a_{k}}(\lambda,m),$ converging in $\mathcal{S}'(\mathbb{K}).$  By (\ref{fourier a}) and the fact that $\sum_{k}|\beta_k|<\infty,$
$$\sum_k|\beta_k||\widehat{a_k}(\lambda,m)|\leq C \sum_k|\beta_k|\mathcal{N}(\lambda,m)^{\frac{Q}{2}(\frac{1}{p}-1)}
\leq C \mathcal{N}(\lambda,m)^{\frac{Q}{2}(\frac{1}{p}-1)}
\|f\|_{H^{p}(\mathbb{K})}<\infty.$$
Therefore, the sum $\mathcal{F}(f)(\lambda,m)=\sum_{k}\beta_{k}\widehat{a}_{k}(\lambda,m)$ converges absolutely on $\mathbb{K}.$  Furthermore, on each
compact set $K,~\mathcal{N}(\lambda,m)$ is bounded by a constant $C$ independent of $a,$ so the absolute convergence
above is also uniform on each compact set $K.$ With $\widehat{a}_{k}$ infinitely differentiable (hence continuous) for all $k,$ we conclude $\mathcal{F}(f)$ is continuous on all compact sets $K,$ and hence on
$\mathbb{K}.$
\end{proof}

We now consider consequences of Theorem \ref{theorem6.6}. The first corollary refines the order of 0 at the origin, and the second gives
 weak--type inequality for the Fourier--Laguerre transform. The third
corollary is the Paley--type inequality on Hardy spaces.
\begin{corollary}\label{Cor-Local}
Let $f\in H^{p}(\mathbb{K}),~0<p\leq1.$ Then,
\begin{align}\label{PW-F(f)-origin}
\lim_{\lambda \rightarrow 0} \frac{\mathcal{F}(f)(\lambda,m)}{\mathcal{N}(\lambda,m)^
{\frac{Q}{2}(\frac{1}{p} - 1)}} = 0,\quad \mbox{for all} \quad m\in \mathbb{N}.
\end{align}
\end{corollary}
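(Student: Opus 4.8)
The plan is to exploit the atomic decomposition together with the refined estimate \eqref{4.4.6} from Lemma \ref{lemma4.4}, which on an individual atom carries a \emph{strictly positive} power of $\mathcal{N}(\lambda,m)$ beyond the critical exponent $\frac{Q}{2}(\frac1p-1)$. Indeed, fix $m\in\mathbb{N}$ and write $f=\sum_k\beta_ka_k$ with $\sum_k|\beta_k|^p\leq C\|f\|^p_{H^p(\mathbb{K})}$, the series converging in $H^p(\mathbb{K})$ hence in $\mathcal{S}'(\mathbb{K})$; by Theorem \ref{theorem6.6} the Fourier--Laguerre transform satisfies $\mathcal{F}(f)(\lambda,m)=\sum_k\beta_k\widehat{a_k}(\lambda,m)$ with absolute and locally uniform convergence. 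The idea is to split this sum at a threshold $N$ and let $\lambda\to 0$ in the finite head while controlling the tail by the $H^p$--norm.

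First I would record the sharpened decay for a single atom. Applying \eqref{4.4.6} with $I=0$ and choosing $s$ so that $s\geq[Q(1/p-1)]$ with $s+1 > Q(1/p-1)$ (which is always possible, and is in fact forced since $s$ is an integer), one gets from the same interpolation argument used in Lemma \ref{lemma4.5} an estimate of the form
\begin{equation}\label{plan-decay}
|\widehat{a}(\lambda,m)|\leq C\,\mathcal{N}(\lambda,m)^{\frac{Q}{2}(\frac1p-1)+\eta}
\end{equation}
on the region $\mathcal{N}(\lambda,m)\leq 1$, for some $\eta>0$ depending only on $p,Q,s$ (namely $\eta=\tfrac12\big(s+1-Q(\tfrac1p-1)\big)>0$), and with $C$ independent of the atom. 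This is exactly the "degree of $0$ at the origin is strictly larger than $\frac{Q}{2}(\frac1p-1)$" phenomenon alluded to in the introduction. Dividing by $\mathcal{N}(\lambda,m)^{\frac{Q}{2}(\frac1p-1)}$ gives $|\widehat{a}(\lambda,m)|/\mathcal{N}(\lambda,m)^{\frac{Q}{2}(\frac1p-1)}\leq C\,\mathcal{N}(\lambda,m)^{\eta}\to 0$ as $\lambda\to 0$, uniformly over all atoms.

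Next I would run the standard $\varepsilon/2$ argument. Given $\varepsilon>0$, use $\sum_k|\beta_k|^p<\infty$ to pick $N$ with $\sum_{k>N}|\beta_k|^p<\varepsilon$; then, since $p\leq 1$, $\sum_{k>N}|\beta_k|\leq\big(\sum_{k>N}|\beta_k|^p\big)^{1/p}\leq\varepsilon^{1/p}$, and by Lemma \ref{lemma4.5},
\begin{equation*}
\sum_{k>N}|\beta_k|\,|\widehat{a_k}(\lambda,m)|\leq C\,\varepsilon^{1/p}\,\mathcal{N}(\lambda,m)^{\frac{Q}{2}(\frac1p-1)},
\end{equation*}
so the tail contributes at most $C\varepsilon^{1/p}$ to $|\mathcal{F}(f)(\lambda,m)|/\mathcal{N}(\lambda,m)^{\frac{Q}{2}(\frac1p-1)}$, uniformly in $\lambda$. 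For the finite head $\sum_{k\leq N}\beta_k\widehat{a_k}(\lambda,m)$, apply \eqref{plan-decay} to each of the finitely many atoms: once $\lambda$ is small enough that $\mathcal{N}(\lambda,m)\leq 1$,
\begin{equation*}
\frac{\big|\sum_{k\leq N}\beta_k\widehat{a_k}(\lambda,m)\big|}{\mathcal{N}(\lambda,m)^{\frac{Q}{2}(\frac1p-1)}}\leq C\,\mathcal{N}(\lambda,m)^{\eta}\sum_{k\leq N}|\beta_k|\xrightarrow[\lambda\to 0]{}0.
\end{equation*}
Combining, $\limsup_{\lambda\to 0}|\mathcal{F}(f)(\lambda,m)|/\mathcal{N}(\lambda,m)^{\frac{Q}{2}(\frac1p-1)}\leq C\varepsilon^{1/p}$, and since $\varepsilon$ was arbitrary the limit is $0$, which is \eqref{PW-F(f)-origin}.

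The main obstacle is the first step: making rigorous that an individual atom's transform decays \emph{faster} than the critical rate near $\lambda=0$, i.e.\ establishing \eqref{plan-decay} with a uniform constant and a genuinely positive gain $\eta$. This requires re-examining the interpolation in Lemma \ref{lemma4.5}: in the regime $\mathcal{N}(\lambda,m)^{Q/2}\leq\|a\|^d_{L^2_\alpha(\mathbb{K})}$ one uses \eqref{PW-F(a)2}, and a short computation shows the exponent of $\mathcal{N}(\lambda,m)$ there equals $\frac{s+1}{2}+\frac{Q}{2d}\big(1-d(\frac{s+1}{Q}+\frac12)\big)=\frac{Q}{2}(\frac1p-1)+\frac{s+1}{2}-\frac{Q}{2}(\frac1p-1)$, but one must check that the "gain" $\frac{s+1}{2}-\frac{Q}{2}(\frac1p-1)$ is not absorbed when passing to the other regime $\mathcal{N}(\lambda,m)^{Q/2}>\|a\|^d_{L^2_\alpha(\mathbb{K})}$; the point is that for $\lambda$ small (fixed $m$) and $r$ bounded below away from $0$ on atoms appearing in the finite head, only the first regime is relevant, so the gain survives. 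Everything else is the routine head/tail splitting for series in $\ell^p$, $0<p\leq1$.
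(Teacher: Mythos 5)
Your argument is correct and is essentially the paper's proof: both rest on the fact that \eqref{PW-F(a)2} gives each atom the decay rate $\mathcal{N}(\lambda,m)^{(s+1)/2}$ with $\frac{s+1}{2}>\frac{Q}{2}(\frac{1}{p}-1)$, and both pass to the full sum using $\sum_k|\beta_k|<\infty$ together with the uniform atom bound of Lemma \ref{lemma4.5} (the paper phrases the limit interchange as dominated convergence, you as an explicit head/tail split, which amount to the same thing here). The only caveat is that your intermediate single--atom bound $|\widehat{a}(\lambda,m)|\leq C\,\mathcal{N}(\lambda,m)^{\frac{Q}{2}(\frac{1}{p}-1)+\eta}$ is not uniform over atoms as stated --- the constant inherits the negative power $\|a\|_{L^{2}_{\alpha}(\mathbb{K})}^{1-d(\frac{s+1}{Q}+\frac{1}{2})}$ from \eqref{PW-F(a)2} and the crossover scale between the two regimes depends on $\|a\|_{L^{2}_{\alpha}(\mathbb{K})}$ --- but this is harmless because the head of your splitting contains only finitely many atoms, so taking the maximum of the individual constants suffices.
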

\begin{proof}
We start by verifing this on an atom $a.$ By \eqref{PW-F(a)2}, we have
$$|\widehat{a}(\lambda,m)|\leq
C~\mathcal{N}(\lambda,m)^{\frac{s+1}{2}}\|a\|_{L^{2}_{\alpha}(\mathbb{K})}
^{1-d\big(\frac{s+1}{Q}+\frac{1}{2}\big)}.$$
Since $s \geq [Q (1/p - 1)],$ this implies $\frac{s + 1}{2}  > \frac{Q}{2}(\frac{1}{p} - 1).$ Therefore, we obtain \eqref{PW-F(f)-origin} for atoms;
$$\lim_{\lambda \rightarrow 0} \frac{\widehat{a}(\lambda,m)}{\mathcal{N}(\lambda,m)^
{\frac{Q}{2}(\frac{1}{p} - 1)}} = 0,\quad \mbox{for all} \quad m\in \mathbb{N}.$$
Now if $f \in H^p(\mathbb{K}),$ we can decompose $f = \sum_{k} \beta_k a_k,$ for $\sum_{k} |\beta_k|^p <\infty$ and $(p, q, s)$--atoms $a_k.$ Thus,
$$\frac{\mathcal{F}(f)(\lambda,m)}{\mathcal{N}(\lambda,m)^{\frac{Q}{2}(\frac{1}{p} - 1)}}\leq \sum_{k = 1}^{\infty} \frac{|\widehat{a_k}(\lambda,m)|}{\mathcal{N}(\lambda,m)^{\frac{Q}{2}(\frac{1}{p} - 1)}} |\beta_k|.$$
By \eqref{(4.10)} and the fact that $\sum_{k} |\beta_k| <\infty,$ we can apply the Dominated Convergence Theorem to the above sum (treated as an integral). Since each term in the sum goes to $0$ as $\lambda \to 0$ for all $m\in\mathbb{N}$ we obtain \eqref{PW-F(f)-origin}.
\end{proof}
\begin{corollary}
Let $f$ be a function in $H^p(\mathbb{K}),$ with $0<p\leq1.$ Then
$$\gamma_{\alpha}\Big(\big\{(\lambda,m)\in\widehat{\mathbb{K}}: ~\mathcal{N}(\lambda,m)^{\frac{Q}{2}(1-\frac{2}{p})}|\mathcal{F}(f)(\lambda,m)|\geq \beta\big\}\Big)\leq C \frac{\|f\|^{p}_{H^{p}(\mathbb{K})}}{\beta^{p}},~~ \beta>0.$$
\end{corollary}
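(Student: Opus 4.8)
The plan is to deduce the weak-type bound directly from the pointwise estimate of Theorem~\ref{theorem6.6}, together with a volume estimate for the sublevel sets of the quasi--semi--norm $\mathcal{N}$ with respect to the Plancherel measure $\gamma_{\alpha}$ on $\widehat{\mathbb{K}}$. Since $\mathcal{F}(f)$ is a continuous function by Theorem~\ref{theorem6.6}, the set appearing in the statement is genuinely a subset of $\widehat{\mathbb{K}}$ and there are no measurability issues; if $\|f\|_{H^p(\mathbb{K})}=0$ this set is empty, so we may assume $f\neq0$.

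The first ingredient I would record is the scaling identity
$$\gamma_{\alpha}\big(\{(\lambda,m)\in\widehat{\mathbb{K}}:\mathcal{N}(\lambda,m)<R\}\big)=C_{0}\,R^{Q/2},\qquad R>0,$$
with $C_{0}=\gamma_{\alpha}(\{\mathcal{N}<1\})<\infty$. Indeed, since $\mathcal{N}(\lambda,m)=4|\lambda|\,(m+\tfrac{\alpha+1}{2})$ is homogeneous of degree $1$ in $\lambda$, the substitution $\lambda=R\mu$ transforms the integral defining the left-hand side into the one over $\{\mathcal{N}(\mu,m)<1\}$, picking up the factor $R^{\alpha+1}\cdot R=R^{\alpha+2}=R^{Q/2}$ from $d\gamma_{\alpha}(\lambda,m)=|\lambda|^{\alpha+1}L_{m}^{\alpha}(0)\,\delta_{m}\otimes d\lambda$ (recall $Q=2\alpha+4$). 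The finiteness of $C_{0}$ follows from $L_{m}^{\alpha}(0)=\binom{m+\alpha}{m}=O(m^{\alpha})$, which makes $\sum_{m\geq0}L_{m}^{\alpha}(0)\,(m+\tfrac{\alpha+1}{2})^{-(\alpha+2)}$ convergent; equivalently, this expresses that $(\widehat{\mathbb{K}},\mathcal{N},\gamma_{\alpha})$ is a space of homogeneous type of homogeneous dimension $Q/2$.

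Granting this, the rest is exponent bookkeeping. By Theorem~\ref{theorem6.6}, for every $(\lambda,m)\in\widehat{\mathbb{K}}$,
$$\mathcal{N}(\lambda,m)^{\frac{Q}{2}(1-\frac{2}{p})}\,|\mathcal{F}(f)(\lambda,m)|\leq C\,\|f\|_{H^p(\mathbb{K})}\,\mathcal{N}(\lambda,m)^{\frac{Q}{2}(1-\frac{2}{p})+\frac{Q}{2}(\frac{1}{p}-1)}=C\,\|f\|_{H^p(\mathbb{K})}\,\mathcal{N}(\lambda,m)^{-\frac{Q}{2p}}.$$
Because $-\frac{Q}{2p}<0$, for any $\beta>0$ the level set $E_{\beta}:=\{(\lambda,m)\in\widehat{\mathbb{K}}:\mathcal{N}(\lambda,m)^{\frac{Q}{2}(1-\frac{2}{p})}|\mathcal{F}(f)(\lambda,m)|\geq\beta\}$ is contained in $\{(\lambda,m):\mathcal{N}(\lambda,m)\leq R_{\beta}\}$ with $R_{\beta}=(C\,\|f\|_{H^p(\mathbb{K})}/\beta)^{2p/Q}$. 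Applying the volume estimate with this $R_{\beta}$, for which $R_{\beta}^{Q/2}=(C\,\|f\|_{H^p(\mathbb{K})}/\beta)^{p}$, yields
$$\gamma_{\alpha}(E_{\beta})\leq C_{0}\,R_{\beta}^{Q/2}=C_{0}\,\big(C\,\|f\|_{H^p(\mathbb{K})}\big)^{p}\,\beta^{-p}=C\,\frac{\|f\|_{H^p(\mathbb{K})}^{p}}{\beta^{p}},$$
which is exactly the asserted weak-type inequality.

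The only step that is not purely formal is the volume estimate of the second paragraph; the point to be careful about is the convergence of the series over $m$, which is what pins the effective dimension of $(\widehat{\mathbb{K}},\gamma_{\alpha})$ at $Q/2$ rather than $Q$. (One could instead expand $f=\sum_{k}\beta_{k}a_{k}$ into atoms, bound $|\mathcal{F}(f)|\leq\sum_{k}|\beta_{k}|\,|\widehat{a_{k}}|$ via Lemma~\ref{lemma4.5}, and use $\sum_{k}|\beta_{k}|\leq(\sum_{k}|\beta_{k}|^{p})^{1/p}$ for $p\leq1$, but this merely reproves the pointwise bound of Theorem~\ref{theorem6.6} and then proceeds as above.)
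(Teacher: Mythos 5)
Your proof is correct and follows essentially the same route as the paper: both use the pointwise bound of Theorem \ref{theorem6.6} to trap the level set inside $\{\mathcal{N}(\lambda,m)\leq r_p\}$ with $r_p=(C\|f\|_{H^p(\mathbb{K})}/\beta)^{2p/Q}$, and then compute $\gamma_{\alpha}$ of that sublevel set as $C\,r_p^{Q/2}$ via the convergent series $\sum_{m}L^{\alpha}_{m}(0)(4m+2\alpha+2)^{-(\alpha+2)}$. Your explicit scaling identity for $\gamma_{\alpha}(\{\mathcal{N}<R\})$ is just a cleaner packaging of the same computation the paper carries out directly.
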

\begin{proof}
Let $f\in H^p(\mathbb{K})$ with $0<p\leq1$  and let $\beta>0.$
We consider the set
$$E^{p}_{\lambda,m}=\{(\lambda,m)\in\widehat{\mathbb{K}} : \mathcal{N}(\lambda,m)^{\frac{Q}{2}(1-\frac{2}{p})}
|\mathcal{F}(f)(\lambda,m)|\geq \beta\big\}.$$
From (\ref{(4.10)}) it follows that
\begin{eqnarray*}
\gamma_{\alpha}\Big(E^{p}_{\lambda,m}\Big)
\leq C \int_{B(e,r_{p})}d\gamma_{\alpha}(\lambda,m)
&\leq&
   C\sum_{m\geq0}L^\alpha_m(0)\int_{{-r_{p}\over 4m+2\alpha+2}}^{r_{p}\over 4m+
   2\alpha+2}|\lambda|^{\alpha+1}d\lambda\\
&=& C~r^{Q/2}_{p} \sum_{m\geq0} {L_m^\alpha(0)\over
(4m +2\alpha+ 2)^{ \alpha+2}}.
\end{eqnarray*}
Using the fact that
$L^{\alpha}_{m}(0)\sim\frac{m^{\alpha}}{\Gamma(\alpha+1)}$  and $r_{p}=\big(\|f\|_{H^{p}(\mathbb{K})}/\beta\big)
^{2p/Q}$ which finishes the proof.
\end{proof}
\begin{corollary}\cite{AM00}(Paley--type inequality)\label{t3.3.11}
Let $0<p \leq 1.$ There exists $C > 0$ such that, for every
$f\in H^p(\mathbb{K}),$ we have
\begin{equation}\label{3.3.13}
\int_{\widehat{\mathbb{K}}}\frac{|\mathcal{F}(f)(\lambda,m)|^{p}}
{|\mathcal{N}(\lambda,m)|^{\frac{Q}{2}(2-p)}}
d\gamma_{\alpha}(\lambda,m)\leq C\|f\|^{p}_{H^{p}(\mathbb{K})},
\end{equation}
where $C$ is independent of $f.$
\end{corollary}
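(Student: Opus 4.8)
The plan is to reduce the estimate to the level of a single atom and then sum, exploiting the sub-additivity of the $H^p$ quasi-norm and of the integral with exponent $p\le 1$. First I would take $f\in H^p(\K)$ and use the atomic decomposition $f=\sum_k\beta_k a_k$ with $(p,2,s)$-atoms $a_k$ and $\sum_k|\beta_k|^p\le C\|f\|_{H^p(\K)}^p$. Since $\mathcal{F}(f)=\sum_k\beta_k\widehat{a_k}$ (Theorem \ref{theorem6.6} guarantees absolute convergence), the elementary inequality $|\sum_k c_k|^p\le\sum_k|c_k|^p$ valid for $0<p\le 1$ gives
\begin{equation*}
\int_{\widehat{\K}}\frac{|\mathcal{F}(f)(\la,m)|^p}{|\mathcal{N}(\la,m)|^{\frac{Q}{2}(2-p)}}\,d\ga_\al(\la,m)
\le \sum_k|\beta_k|^p\int_{\widehat{\K}}\frac{|\widehat{a_k}(\la,m)|^p}{|\mathcal{N}(\la,m)|^{\frac{Q}{2}(2-p)}}\,d\ga_\al(\la,m),
\end{equation*}
so it suffices to prove the uniform bound
\begin{equation}\label{atomPaley}
\int_{\widehat{\K}}\frac{|\widehat{a}(\la,m)|^p}{|\mathcal{N}(\la,m)|^{\frac{Q}{2}(2-p)}}\,d\ga_\al(\la,m)\le C
\end{equation}
for every $(p,2,s)$-atom $a$, with $C$ independent of $a$.

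For \eqref{atomPaley} I would split the integral over $\widehat{\K}$ into the region where $\mathcal{N}(\la,m)$ is small and where it is large, calibrated by the $L^2_\al$-size of the atom; this is exactly the dichotomy used in the proof of Lemma \ref{lemma4.5}. Concretely, with $d=2p/(2-p)$, set the threshold $\mathcal{N}(\la,m)^{Q/2}\lessgtr\|a\|_{L^2_\al(\K)}^{d}$. On the \emph{small} region use the Taylor-remainder estimate \eqref{PW-F(a)2}, $|\widehat{a}(\la,m)|\le C\,\mathcal{N}(\la,m)^{(s+1)/2}\|a\|_{L^2_\al}^{1-d((s+1)/Q+1/2)}$, which after raising to the power $p$ and dividing by $\mathcal{N}^{\frac{Q}{2}(2-p)}$ leaves a positive power of $\mathcal{N}$ (since $s\ge[Q(1/p-1)]$ forces $(s+1)/2>\frac{Q}{2}(\frac1p-1)$, i.e. $\frac{p(s+1)}{2}-\frac{Q}{2}(2-p)+\frac{Q}{2}p>0$ once one checks the arithmetic), so the integral converges and is controlled by a power of $\|a\|_{L^2_\al}$ that cancels against the remaining $\|a\|_{L^2_\al}$ factor. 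On the \emph{large} region use instead the flat bound $|\widehat{a}(\la,m)|^2\le C\|a\|_{L^2_\al}^{2-d}$ from Lemma \ref{lemma4.4}(ii) with $q=1$; here dividing by $\mathcal{N}^{\frac{Q}{2}(2-p)}$ and integrating over $\{\mathcal{N}(\la,m)^{Q/2}>\|a\|_{L^2_\al}^d\}$ gives a \emph{negative} power of $\mathcal{N}$ that is integrable at infinity, again producing a power of $\|a\|_{L^2_\al}$ that exactly absorbs the prefactor. In both cases the powers of $\|a\|_{L^2_\al}$ are arranged to cancel precisely because the threshold was chosen at $\|a\|_{L^2_\al}^d$, so one is left with an absolute constant, proving \eqref{atomPaley}.

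The one point requiring care — and the main obstacle — is carrying out the integration $\int d\ga_\al$ over the relevant sublevel/superlevel sets of $\mathcal{N}$ on the Heisenberg fan. Because $d\ga_\al(\la,m)=|\la|^{\al+1}L_m^\al(0)\,\delta_m\otimes d\la$ and $\mathcal{N}(\la,m)=4|\la|(m+\frac{\al+1}{2})$, a region $\{\mathcal{N}(\la,m)^{\theta}<\rho\}$ becomes, for each fixed $m$, the interval $|\la|<\rho^{1/\theta}/(4m+2\al+2)$, and summing the resulting series $\sum_m L_m^\al(0)(4m+2\al+2)^{-(\al+2)}$ requires the asymptotic $L_m^\al(0)\sim m^\al/\Gamma(\al+1)$, exactly as in the weak-type corollary above; convergence of this series (and of the analogous one from the large region, which needs a power of $\mathcal{N}$ with exponent below a critical threshold) is what pins down that $C<\infty$. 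I would handle this by the same computation displayed in the proof of the preceding corollary, checking that the exponent of $(4m+2\al+2)$ in each series stays strictly below $-1-\al$ after accounting for the $m^\al$ growth; this is a routine verification given the constraint $s\ge[Q(1/p-1)]$ and the definition $d=2p/(2-p)\le 2$.
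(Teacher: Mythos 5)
The paper itself offers no proof of this corollary: it is stated with a citation to \cite{AM00} and merely presented as a ``consequence'' of Theorem \ref{theorem6.6}, so there is no in-paper argument to compare yours against line by line. Your strategy is nevertheless the right one, and essentially forced: the pointwise bound of Theorem \ref{theorem6.6} alone only gives $|\mathcal{F}(f)(\lambda,m)|^{p}\,\mathcal{N}(\lambda,m)^{-\frac{Q}{2}(2-p)}\lesssim \|f\|_{H^p}^{p}\,\mathcal{N}(\lambda,m)^{-Q/2}$, which is exactly critical at both $0$ and $\infty$, so one must descend to atoms; your reduction via $|\sum_k c_k|^p\le\sum_k|c_k|^p$ and the two-region splitting at $\mathcal{N}(\lambda,m)^{Q/2}\lessgtr\|a\|^{d}_{L^{2}_{\alpha}(\mathbb{K})}$ is the standard route. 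The small-region computation checks out: the exponent $\frac{p(s+1)}{2}-\frac{Q}{2}(2-p)$ exceeds $-\frac{Q}{2}$ precisely because $s+1>Q(\frac1p-1)$, the series $\sum_m L^{\alpha}_{m}(0)(4m+2\alpha+2)^{-(\alpha+2)}$ converges, and the powers of $\|a\|_{L^{2}_{\alpha}}$ cancel exactly. (Minor slip: your parenthetical ``i.e.'' should read $\frac{p(s+1)}{2}-\frac{Q}{2}(2-p)+\frac{Q}{2}>0$, not $+\frac{Q}{2}p$; the first form you state, $\frac{s+1}{2}>\frac{Q}{2}(\frac1p-1)$, is the correct condition.)

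The genuine gap is in the large region at the endpoint $p=1$. Using the flat bound $|\widehat{a}(\lambda,m)|^{p}\le C\|a\|_{L^{2}_{\alpha}}^{p(1-d/2)}$ leaves you with $\int_{\mathcal{N}>\rho}\mathcal{N}(\lambda,m)^{-\frac{Q}{2}(2-p)}d\gamma_{\alpha}(\lambda,m)$; for each fixed $m$ the $\lambda$-integrand is $|\lambda|^{-\frac{Q}{2}(2-p)+\alpha+1}$, and convergence at infinity requires $-\frac{Q}{2}(2-p)<-\frac{Q}{2}$, i.e.\ $p<1$. At $p=1$ --- exactly the Hardy-inequality case singled out in the remark following the corollary --- the integral diverges logarithmically, so your argument as written does not cover the full stated range. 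The repair is to replace the flat pointwise bound by H\"older with exponents $2/p$ and $2/(2-p)$ together with Plancherel:
\begin{equation*}
\int_{\mathcal{N}>\rho}\frac{|\widehat{a}|^{p}}{\mathcal{N}^{\frac{Q}{2}(2-p)}}\,d\gamma_{\alpha}
\le \|\widehat{a}\|^{p}_{L^{2}_{\alpha}(\widehat{\mathbb{K}})}
\Big(\int_{\mathcal{N}>\rho}\mathcal{N}^{-Q}\,d\gamma_{\alpha}\Big)^{\frac{2-p}{2}}
\le C\,\|a\|^{p}_{L^{2}_{\alpha}(\mathbb{K})}\,\rho^{-\frac{Q(2-p)}{4}}=C,
\end{equation*}
since $\int_{\mathcal{N}>\rho}\mathcal{N}^{-Q}d\gamma_{\alpha}\simeq\rho^{-Q/2}$ converges for every $p\in(0,1]$ and the powers of $\|a\|_{L^{2}_{\alpha}}$ again cancel because $\frac{d(2-p)}{2}=p$. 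With this one change your proof is complete on the whole range $0<p\le1$.
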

\begin{remarks}
\begin{verse}
\end{verse}
\begin{enumerate}
\item A version of the Hardy--type inequality for  Fourier--Laguerre transform on $\mathbb{K}$ appears when we take $p = 1.$
\item If $\al=n-1, \, (n\in \mathbb{N}-\{0\})$, $Q=2n+2$,
which is nothing but the homogenous dimension of Heisenberg group
$\mathbb{H}^n.$
\end{enumerate}
\end{remarks}
By using the Marcinkiewicz interpolation theorem we obtain an analogue of the Paley--type inequality  was extended (the range of $p$) to $L^{p}_{\alpha}(\mathbb{K}),~1<p\leq2,$
that is a Pitt--type inequality for the Fourier--Laguerre transform.
\begin{theorem}(Pitt--type inequality)
Let $1<p\leq2.$ There exists $C>0$ such that,  for every $f\in L^{p}_{\alpha}(\mathbb{K}),$ we have
$$\int_{\widehat{\mathbb{K}}}\frac{|\mathcal{F}(f)(\lambda,m)|^{p}}
{|\mathcal{N}(\lambda,m)|^{\frac{Q}{2}(2-p)}}
d\gamma_{\alpha}(\lambda,m)\leq C~\|f\|^{p}_{L^{p}(\mathbb{K})}$$
where $C$ is independent of $f.$
\end{theorem}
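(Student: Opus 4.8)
The plan is to recast the asserted inequality as an $L^p\to L^p$ bound for a single sublinear operator mapping between two a priori unrelated measure spaces, and then to apply the Marcinkiewicz interpolation theorem; the point worth stressing is that here the endpoint at $p=1$ is a genuine $L^{1}_\alpha(\mathbb{K})$ estimate, not a Hardy--space one. First I would introduce on $\widehat{\mathbb{K}}$ the auxiliary measure $d\omega(\lambda,m)=\mathcal{N}(\lambda,m)^{-Q}\,d\gamma_\alpha(\lambda,m)$ and the sublinear operator
$$Tf(\lambda,m)=\mathcal{N}(\lambda,m)^{Q/2}\,\big|\mathcal{F}(f)(\lambda,m)\big|,$$
where $\mathcal{F}$ is taken on $L^{1}_\alpha(\mathbb{K})+L^{2}_\alpha(\mathbb{K})$, its two definitions agreeing on the overlap by Plancherel. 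Since $Q-\frac{Q}{2}(2-p)=\frac{Qp}{2}$, substituting $d\gamma_\alpha=\mathcal{N}^{Q}\,d\omega$ yields the identity
$$\int_{\widehat{\mathbb{K}}}\frac{|\mathcal{F}(f)(\lambda,m)|^{p}}{|\mathcal{N}(\lambda,m)|^{\frac{Q}{2}(2-p)}}\,d\gamma_\alpha(\lambda,m)=\int_{\widehat{\mathbb{K}}}\big(Tf(\lambda,m)\big)^{p}\,d\omega(\lambda,m),$$
so the theorem is equivalent to $\|Tf\|_{L^{p}(\widehat{\mathbb{K}},\,d\omega)}\le C_p\,\|f\|_{L^{p}_\alpha(\mathbb{K})}$ for $1<p\le 2$.

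The only substantial ingredient is the growth estimate
$$\omega\big(\{(\lambda,m)\in\widehat{\mathbb{K}}\;:\;\mathcal{N}(\lambda,m)>R\}\big)\le C\,R^{-Q/2},\qquad R>0,$$
which I would prove by the same computation as in the weak--type estimate obtained above: writing $d\gamma_\alpha=|\lambda|^{\alpha+1}L^{\alpha}_m(0)\,\delta_m\otimes d\lambda$, $\mathcal{N}(\lambda,m)=4|\lambda|(m+\frac{\alpha+1}{2})$, $Q=2\alpha+4$, and using $L^{\alpha}_m(0)\sim m^{\alpha}/\Gamma(\alpha+1)$ together with $\sum_m m^{\alpha}(4m+2\alpha+2)^{-(\alpha+2)}\sim\sum_m m^{-2}<\infty$, one first gets $\gamma_\alpha(\{\mathcal{N}<\rho\})\sim\rho^{Q/2}$, and a dyadic decomposition of $\{\mathcal{N}>R\}$ against the weight $\mathcal{N}^{-Q}$ then gives the displayed bound. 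With this granted, the two endpoints are immediate. At $p=2$, Plancherel's formula gives
$$\|Tf\|_{L^{2}(d\omega)}^{2}=\int_{\widehat{\mathbb{K}}}\mathcal{N}^{Q}\,|\mathcal{F}(f)|^{2}\,\mathcal{N}^{-Q}\,d\gamma_\alpha=\|\mathcal{F}(f)\|^{2}_{L^{2}_\alpha(\widehat{\mathbb{K}})}=\|f\|^{2}_{L^{2}_\alpha(\mathbb{K})},$$
so $T$ is of strong, hence weak, type $(2,2)$. At $p=1$, inequality $(\ref{33})$ gives $|\mathcal{F}(f)(\lambda,m)|\le\|f\|_{L^{1}_\alpha(\mathbb{K})}=:M$ for all $(\lambda,m)$, whence $\{Tf>\beta\}\subseteq\{\mathcal{N}^{Q/2}M>\beta\}=\{\mathcal{N}>(\beta/M)^{2/Q}\}$, so by the growth estimate $\omega(\{Tf>\beta\})\le C\,(\beta/M)^{-1}=C\,\|f\|_{L^{1}_\alpha(\mathbb{K})}/\beta$; thus $T$ is of weak type $(1,1)$.

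It then suffices to apply the Marcinkiewicz interpolation theorem to the sublinear operator $T$ with the endpoints $(p_0,q_0)=(1,1)$ and $(p_1,q_1)=(2,2)$, which is legitimate because the source space $(\mathbb{K},dm_\alpha)$ and the target space $(\widehat{\mathbb{K}},d\omega)$ may be unrelated and $q_0\neq q_1$: this yields that $T$ is of strong type $(p,p)$ whenever $\frac{1}{p}=1-\frac{\theta}{2}$ for some $\theta\in(0,1)$, i.e. for every $p\in(1,2)$. Together with the case $p=2$ settled above, this is precisely $\|Tf\|_{L^{p}(d\omega)}\le C_p\|f\|_{L^{p}_\alpha(\mathbb{K})}$, and unwinding the reduction proves the theorem. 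I expect the growth estimate for $\omega$ — which quantifies the order of contact of the Plancherel measure with the quasi--seminorm $\mathcal{N}$ on the Heisenberg fan — to be the only real obstacle; once it is in place, the interpolation and the bookkeeping with the exponents are routine. In particular, and in contrast with Corollary \ref{t3.3.11}, no atomic or molecular structure enters here: the $p=1$ endpoint rests only on the trivial $L^{1}_\alpha$--$L^{\infty}_\alpha$ bound $(\ref{33})$ and on the measure growth.
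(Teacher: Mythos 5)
Your proposal is correct and follows essentially the same route as the paper: the same operator $Tf=\mathcal{N}^{Q/2}|\mathcal{F}(f)|$ acting into the weighted measure $\mathcal{N}^{-Q}\,d\gamma_\alpha$, a strong $(2,2)$ bound from Plancherel, a weak $(1,1)$ bound from $\|\mathcal{F}(f)\|_{L^\infty_\alpha}\le\|f\|_{L^1_\alpha}$ combined with the growth estimate for the weighted measure of $\{\mathcal{N}>R\}$, and Marcinkiewicz interpolation. Your write-up is somewhat more explicit about the reduction of the stated inequality to an $L^p(d\omega)$ bound and about the uniformity of the superlevel set in $m$, but the substance is identical to the paper's argument.
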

\begin{proof}Consider the map defined by
$$Tf(\lambda,m):=|\mathcal{N}(\lambda,m)|^{Q/2}|
\mathcal{F}(f)(\lambda,m)|.$$
It follows from Plancherel's theorem for the Fourier--Laguerre transform that the operator $T$ is a strong $(2, 2)$ type, i.e;
$$\|Tf\|_{L^{2}_{\alpha}(\widehat{\mathbb{K}},d\nu_{\alpha})}
\leq C \|f\|_{L^{2}_{\alpha}(\mathbb{K},dm_{\alpha})},\quad \mbox{where}\quad
d\nu_{\alpha}(\lambda,m):=\frac{d\gamma_{\alpha}(\lambda,m)}{|\mathcal{N}
(\lambda,m)|^{Q}}.$$
Now, consider the set  $E^2_{\lambda,m}=\{(\lambda,m)\in\widehat{\mathbb{K}} : T(f)(\lambda,m)>\beta\}.$
Since $\|\mathcal{F}(f)\|_{L^{\infty}_{\alpha}(\widehat{\mathbb{K}})}\leq \|f\|_{L^1_{\alpha}(\mathbb{K})}.$ Therefore, for $(\lambda,m)\in E_{\lambda,m}$ we have $|\lambda|^{Q/2}> \frac{\beta}{(4m+2\alpha+2)^{Q/2}\|f\|_{L^1_{\alpha}(\mathbb{K})}}
:=R^{2/Q}.$ This forces
$$\nu_{\alpha}(E_{\lambda,m})=\int_{E_{\lambda,m}}
\frac{d\gamma_{\alpha}(\lambda,m)}{|\mathcal{N}(\lambda,m)|^{Q}}
\leq C \sum_{m\geq0}L^\alpha_m(0)
\int_{\mathbb{R}\backslash(-R,R)}|\lambda|^{-Q+\alpha+1}d\lambda
= C~R^{-Q/2}.$$
Thus, $\nu_{\alpha}\Big(\{(\lambda,m) : T(f)(\lambda,m)>\beta\}\Big)\leq C \frac{\|f\|_{L^1_{\alpha}(\mathbb{K})}}{\beta},$ which implies $T$ is of weak type $(1, 1).$
Thus, the Marcinkiewicz interpolation theorem implies the theorem.
\end{proof}

\subsection{The Fourier--Laguerre multiplier}

Let $M : \widehat{\mathbb{K}}\longrightarrow \mathbb{C}$ be a bounded function and $T_{M}$ the multiplier operator initially
defined  by
\begin{equation*}
f\longrightarrow T_{M}(f)=\mathcal{F}^{-1}(M\widehat{f}),\qquad
f\in H^{p}(\mathbb{K})\cap \mathcal{S}(\mathbb{K})
\end{equation*}
The aim of this section is to prove an analogue of the famous H\"ormander
multiplier theorem, states as follows.
\begin{theorem}\label{multiplier}
Let $0<p\leq 1$ and $\tau>Q(\frac{1}{p}-\frac{1}{2})$ is even.
Suppose that $M\in \mathcal{C}^{d(I)}(\widehat{\mathbb{K}}),\linebreak 0\leq d(I)\leq\tau,$  is a bounded measurable function satisfying which satisfies the H\"ormander condition :
\begin{equation}\label{Hormcondition}
\int_{\frac{R}{2}\leq \mathcal{N}(\lambda,m)\leq R}
|\mathbf{D}_{\lambda}^{I}M(\lambda,m)|^{2}d\gamma_{\alpha}(\lambda,m)
\leq C~R^{Q-d(I)},\quad \mbox{for all}~R>0.
\end{equation}
Then the operator $T_M $ can be extended a bounded operator on $H^{p}(\mathbb{K}).$
\end{theorem}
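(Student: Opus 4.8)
The strategy follows the classical route for multiplier theorems on Hardy spaces via the molecular characterization. By Remark following Theorem \ref{theorem3.3}, it suffices to show that whenever $a$ is a $(p,2,s)$--atom centered at the origin (with $s = \tau - 1$, say, chosen so that $s \geq [Q(1/p-1)]$ and $\tau = s+1 > Q(1/p-1/2)$ is even), the image $T_M a$ is a $(p,2,s,\varepsilon)$--molecule with molecular norm $\mathfrak{N}(T_M a) \leq C$ independent of $a$. The density of $H^p \cap \mathcal{S}$ in $H^p$, together with the uniform bound $\|T_M a\|_{H^p} \leq C_2 \mathfrak{N}(T_M a) \leq C$, then yields the bounded extension to all of $H^p(\mathbb{K})$.

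The proof breaks into three pieces corresponding to the three molecule conditions. First, condition (iii), the moment condition: since $\widehat{T_M a}(\lambda,m) = M(\lambda,m)\widehat{a}(\lambda,m)$ and $\widehat{a}$ vanishes to high order at the origin (Corollary \ref{Cor-Local}, refining Lemma \ref{lemma4.5}), one checks that $M \widehat{a}$ still vanishes to sufficient order at the origin, which by the Taylor expansion in Proposition \ref{p3.3.10} translates into the vanishing of $\int (x,t)^I T_M a \, dm_\alpha$ for $d(I) \leq s$. Second, the $L^2$ bound $\|T_M a\|_{L^2_\alpha} \leq C \|a\|_{L^2_\alpha}$: this is immediate from Plancherel and the boundedness of $M$. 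Third — and this is the crux — the weighted $L^2$ estimate $\|T_M a \cdot \mathbf{N}(\cdot,\cdot)^{Qb}\|_{L^2_\alpha} < \infty$ with a bound expressible through $\|a\|_{L^2_\alpha}$. The idea here is to pass to the Fourier--Laguerre side: multiplication by $\mathbf{N}(x,t)^{Qb}$ on $\mathbb{K}$ corresponds (roughly, after using $\mathbf{N}(x,t)^2 = (x^4+4t^2)^{1/2}$ and Lemma \ref{lemma2}) to a differential operator in $\lambda$ of order $\sim Qb$ applied to $M\widehat{a}$. Expanding by the Leibniz rule distributes the $\lambda$--derivatives between $M$ and $\widehat{a}$; the derivatives landing on $M$ are controlled by the Hörmander condition \eqref{Hormcondition}, decomposed dyadically over the shells $R/2 \leq \mathcal{N}(\lambda,m) \leq R$, while the derivatives landing on $\widehat{a}$ are controlled by the estimates $|\mathbf{D}^I_\lambda \widehat{a}(\lambda,m)| \leq C \mathcal{N}(\lambda,m)^{(s-d(I)+1)/2}\|a\|_{L^2_\alpha}^{1-d((s+1)/Q+1/2)}$ from Lemma \ref{lemma4.4}(i) and the $L^{q'}$ bounds from Lemma \ref{lemma4.4}(ii).

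On each dyadic shell $\{R/2 \leq \mathcal{N}(\lambda,m) \leq R\}$ one applies Hölder's inequality to split the product of a derivative of $M$ against a derivative of $\widehat{a}$: the $M$--factor goes into $L^2$ of the shell (bounded by $R^{(Q-d(I))/2}$ via \eqref{Hormcondition}), and the $\widehat{a}$--factor goes into the complementary $L^{q'}$ norm, bounded via Lemma \ref{lemma4.4}(ii). Summing the resulting geometric series over dyadic $R$ — convergence is guaranteed precisely by the hypothesis $\tau > Q(1/p - 1/2)$, which makes the exponent of $R$ negative for the terms where many derivatives hit $M$ — gives $\|T_M a \cdot \mathbf{N}^{Qb}\|_{L^2_\alpha} \leq C\|a\|_{L^2_\alpha}^{a/b - 1}$ (in the exponent notation of the molecule definition), and combining with the $L^2$ bound yields $\mathfrak{N}(T_M a) \leq C$. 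The main obstacle is bookkeeping the weight $\mathbf{N}(x,t)^{Qb}$ correctly under the Fourier--Laguerre transform — the norm $\mathbf{N}$ is not a polynomial, so one must either use the pointwise equivalence $\mathbf{N}(x,t)^{2k} \asymp (x^2+|t|)^k$ and Lemma \ref{lemma2} to reduce to integer powers, handling fractional $b$ by interpolation between consecutive integers (as already done for Lemma \ref{lemma4.4}(ii)), or work directly with $(x,t)^I$ monomials and sum. Managing the interplay between the dyadic decomposition, the two families of estimates, and this fractional-weight issue, while keeping all exponents of $\|a\|_{L^2_\alpha}$ and $\mathcal{N}(\lambda,m)$ matching the molecule definition, is where the real work lies.
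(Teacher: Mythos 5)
Your proposal follows essentially the same route as the paper's proof: reduce via the molecular characterization to showing $\mathfrak{N}(T_M a)\leq C$ for a $(p,2,\tau-1)$--atom, obtain the size condition from Plancherel and the boundedness of $M$, transfer the weight $\mathbf{N}(\cdot,\cdot)^{\tau}$ (with $\tau$ even) to $\lambda$--derivatives of $M\widehat{a}$, distribute them by Leibniz, control the atom factors by Lemma \ref{lemma4.4} and the multiplier factors on a dyadic decomposition split at the scale $\|a\|_{L^{2}_{\alpha}(\mathbb{K})}^{2d/Q}$, and verify the moment condition by the vanishing of $\mathbf{D}_{\lambda}^{I}(M\widehat{a})$ at the origin. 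The only difference in execution is that the paper's displayed estimates actually invoke the pointwise Mihlin bound (\ref{Mihlin}) on each dyadic piece, whereas you propose to use the integrated H\"ormander condition (\ref{Hormcondition}) directly via Cauchy--Schwarz on the shells $\{R/2\leq\mathcal{N}(\lambda,m)\leq R\}$ --- which is in fact closer to what the theorem as literally stated requires, but remains within the same classical argument.
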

\begin{remark}
Note that the H\"ormander condition (\ref{Hormcondition}) is implied by Mihlin's condition :
\begin{equation}\label{Mihlin}
|\mathbf{D}_{\lambda}^{I}M(\lambda,m)|\leq C \mathcal{N}(\lambda,m)^{-\frac{d(I)}{2}},\quad
0\leq d(I)\leq\tau.
\end{equation}
\end{remark}
\begin{proof}
Let a be $a(p,2, s_1)$--atom with $s \leq s_1.$ Let $\varepsilon$ be a constant satisfying the condition of molecules. Then $a$ will be $a (p,2, s,\varepsilon)$ molecule with $\mathfrak{N}(a)\leq C.$ Here C is a
constant independent of $a.$ In particular, if $a$ is supported on a ball centered at the origin and $\tau=Q[1/2+\varepsilon]=Q\beta$ is a positive integer, then by Plancherel's theorem, we know that
\begin{equation*}
\Big\{\|\widehat{a}\|_{L^{2}_{\alpha}(\widehat{\mathbb{K}})}
^{(\frac{1}{2}-\frac{1}{p}
+\frac{\tau}{Q})}\|\mathbf{D}^{d(I)}_{\lambda}\widehat{a}\|_{L^{2}_{\alpha}
(\widehat{\mathbb{K}})}
^{\frac{1}{p}-\frac{1}{2}}\Big\}^{Q/\tau}\leq C,\quad \mbox{where}\quad
0\leq d(I)\leq \tau.
\end{equation*}
Let $a$ be $a (p, 2,\tau-1)$--atom. We want to show that $ \mathcal{F}^{-1}(M\widehat{a})$ is $\linebreak a (p, 2, [Q(\frac{1}{p}-1)], \frac{\tau}{Q}-\frac{1}{2})$--molecule supported on a ball centered at the origin. Moreover,
$$\mathfrak{N}(T_{M}a)\leq C.$$
We first check the size condition for $\mathcal{F}^{-1}(M\widehat{a}).$ By Plancherel's theorem, we just need to show that
\begin{equation}\label{10}
\mathfrak{N}(T_{M}a)=\Big\{\|T_{M}a\|_{L^{2}_{\alpha}(\mathbb{K})}
^{(\frac{1}{2}-\frac{1}{p}
+\frac{\tau}{Q})}\|\mathbf{N}(.,.)^{\tau}T_{M}a\|_{L^{2}_{\alpha}(\mathbb{K})}
^{\frac{1}{p}-\frac{1}{2}}\Big\}^{Q/\tau}\leq C.
\end{equation}
According to Plancherel Theorem,
\begin{eqnarray}
\|T_{M}a\|_{L^{2}_{\alpha}(\mathbb{K})}&=&
\|\widehat{a}~M\|_{L^{2}_{\alpha}(\widehat{\mathbb{K}})}\nonumber\\
&\leq & C \|\widehat{a}\|_{L^{2}_{\alpha}(\widehat{\mathbb{K}})}\nonumber\\
&\leq & C \|a\|_{L^{2}_{\alpha}(\mathbb{K})}.
\end{eqnarray}
Hence (\ref{10}) is followed from
\begin{equation}\label{11}
\|\mathbf{N}(.,.)^{\tau}T_{M}a\|_{L^{2}_{\alpha}(\mathbb{K})}
\leq C \|a\|^{1-\frac{d\tau}{Q}}_{L^{2}_{\alpha}(\mathbb{K})}.
\end{equation}
Because $\NB\xt^\tau=(x^4+4t^2)^{\tau/4}\leq C(|t| + |x|^2)^{\tau/2}$ and
$\frac{\tau}{2}$  is an integer, we have
\begin{eqnarray*}
\|\mathbf{N}(., .)^{\tau}T_{M}a\|_{L^{2}_{\alpha}(\mathbb{K})}&\leq&
C\|(|x|^{2}+|t|)^{\tau/2}T_{M}a\|_{L^{2}_{\alpha}(\mathbb{K})}\\
&\leq&C\|\widehat{(|x|^{2}+|t|)^{\tau/2}T_{M}a}
\|_{L^{2}_{\alpha}(\widehat{\mathbb{K}})}\\
&\leq&C\sum_{d(I)=\tau}\|\mathbf{D}_{\lambda}^{I}(\widehat{T_{M}a})
\|_{L^{2}_{\alpha}(\widehat{\mathbb{K}})}\\
&\leq&C\sum_{d(I)=\tau}\|\mathbf{D}_{\lambda}^{I}(\widehat{a}M)
\|_{L^{2}_{\alpha}(\widehat{\mathbb{K}})}\\
&\leq&C\sum_{d(I')+d(I'')=\tau}\|(\mathbf{D}_{\lambda}^{I'}\widehat{a})
(\mathbf{D}_{\lambda}^{I''}M)
\|_{L^{2}_{\alpha}(\widehat{\mathbb{K}})}.
\end{eqnarray*}
To prove (\ref{11}), we need only to prove
\begin{equation}\label{12}
\|(\mathbf{D}_{\lambda}^{I'}\widehat{a})(\mathbf{D}_{\lambda}^{I''}M)
\|_{L^{2}_{\alpha}(\widehat{\mathbb{K}})}\leq
C \|a\|_{L^{2}_{\alpha}(\mathbb{K})}^{1-\frac{d\tau}{Q}}.
\end{equation}
Let us now consider two cases for proving (\ref{12}). First,
if $d(I') =\tau,$ from (\ref{4.4}), we get
\begin{equation}\label{13}
\|(\mathbf{D}_{\lambda}^{I'}\widehat{a})M\|_{L^{2}_{\alpha}
(\widehat{\mathbb{K}})}
\leq C\|\mathbf{D}_{\lambda}^{I'}\widehat{a}\|_{L^{2}_{\alpha}
(\widehat{\mathbb{K}})}
\leq
C \|a\|_{L^{2}_{\alpha}(\mathbb{K})}^{1-\frac{d\tau}{Q}}.
\end{equation}
Thus, (\ref{12}) holds. Secondly, suppose $0 \leq d(I') <\tau,$ then
\begin{eqnarray}
\|(\mathbf{D}_{\lambda}^{I'}\widehat{a})(\mathbf{D}_{\lambda}^{I''}M) \|^{2}_{L^{2}_{\alpha}(\widehat{\mathbb{K}})}&\leq&
\sum_{m=0}^{\infty}L^{\alpha}_{m}(0)\int_{\mathbb{R}}
|\mathbf{D}_{\lambda}^{I'}\widehat{a}(\lambda,m)|^{2}|
\mathbf{D}_{\lambda}^{I''}M(\lambda,m)|^{2}
|\lambda|^{\alpha+1}d\lambda \nonumber\\
&\leq & \sum_{m=0}^{\infty}L^{\alpha}_{m}(0)\sum_{\ell=-\infty}^{\infty}
\int_{2^{\ell}<|\lambda|\leq 2^{\ell+1}}
|\mathbf{D}_{\lambda}^{I'}\widehat{a}(\lambda,m)|^{2}|
\mathbf{D}_{\lambda}^{I''}M(\lambda,m)|^{2}
|\lambda|^{\alpha+1}d\lambda.\nonumber
\end{eqnarray}
Fix $\ell_{0}$ such that
$2^{\ell}\leq\|a\|^{\frac{2d}{Q}}_{L^{2}_{\alpha}(\mathbb{K})}< 2^{\ell+1}.$ Making use of (\ref{4.4.6}), we get
\begin{eqnarray}\label{15}
&&\sum_{m=0}^{\infty}L^{\alpha}_{m}(0)\sum_{\ell=-\infty}^{\ell_{0}}
\int_{2^{\ell}<|\lambda|\leq 2^{\ell+1}}
|\mathbf{D}_{\lambda}^{I'}\widehat{a}(\lambda,m)|^{2}|
\mathbf{D}_{\lambda}^{I''}M(\lambda,m)|^{2}
|\lambda|^{\alpha+1}d\lambda\nonumber\\
&\leq& C \sum_{m=0}^{\infty}L^{\alpha}_{m}(0)\sum_{\ell=-\infty}^{\ell_{0}}
\int_{2^{\ell}<|\lambda|\leq 2^{\ell+1}}
|\mathbf{D}_{\lambda}^{I'}\widehat{a}(\lambda,m)|^{2}
|\mathcal{N}(\lambda,m)|^{-d(I'')}
|\lambda|^{\alpha+1}d\lambda\nonumber\\
&\leq& C\|a\|^{2-d(\frac{2\tau}{Q}+1)}_{L^{2}_{\alpha}(\mathbb{K})}
\sum_{m=0}^{\infty}L^{\alpha}_{m}(0)\sum_{\ell=-\infty}^{\ell_{0}}
\int_{2^{\ell}<|\lambda|\leq 2^{\ell+1}}
|\mathcal{N}(\lambda,m)|^{\tau-d(I')-d(I'')}
|\lambda|^{\alpha+1}d\lambda\nonumber\\
&\leq& C\|a\|^{2-d(\frac{2\tau}{Q}+1)}_{L^{2}_{\alpha}(\mathbb{K})}
\sum_{m=0}^{\infty}L^{\alpha}_{m}(0)\sum_{\ell=-\infty}^{\ell_{0}}
\int_{2^{\ell}<|\lambda|\leq 2^{\ell+1}}
|\lambda|^{\alpha+1}d\lambda\nonumber\\
&\leq& C\|a\|^{2-d(\frac{2\tau}{Q}+1)}_{L^{2}_{\alpha}(\mathbb{K})}
\sum_{m=0}^{\infty}L^{\alpha}_{m}(0)\sum_{\ell=-\infty}^{\ell_{0}}
\int_{2^{\ell}<|\lambda|\leq 2^{\ell+1}}
\frac{2^{(\ell+1)(\alpha+1)}}{(4m+2\alpha+2)^{\alpha+1}}d\lambda
\nonumber\\
&\leq& C\|a\|^{2-d(\frac{2\tau}{Q}+1)}_{L^{2}_{\alpha}(\mathbb{K})}
\sum_{m=0}^{\infty}L^{\alpha}_{m}(0)\sum_{\ell=-\infty}^{\ell_{0}}
\frac{2^{(\ell+1)(\alpha+2)}}{(4m+2\alpha+2)^{\alpha+2}}
\quad\Big(\mbox{use}~L^{\alpha}_{m}(0)\sim\small{\frac{m^{\alpha}}
{\Gamma(\alpha+1)}}\Big)\nonumber\\
&\leq& C\|a\|^{2-d(\frac{2\tau}{Q}+1)}_{L^{2}_{\alpha}(\mathbb{K})}
2^{\ell_{0}(\alpha+2)}
\leq C\|a\|^{2-\frac{2d\tau}{Q}}_{L^{2}_{\alpha}(\mathbb{K})}.
\end{eqnarray}
By (\ref{4.4}), we have
\begin{eqnarray}\label{16}
&&\sum_{m=0}^{\infty}L^{\alpha}_{m}(0)\sum_{\ell=\ell_{0}+1}^{\infty}
\int_{2^{\ell}<|\lambda|\leq 2^{\ell+1}}
|\mathbf{D}_{\lambda}^{I'}\widehat{a}(\lambda,m)|^{2}|\mathbf{D}_{\lambda}
^{I''}M(\lambda,m)|^{2}
|\lambda|^{\alpha+1}d\lambda\nonumber\\
&\leq& C \sum_{m=0}^{\infty}L^{\alpha}_{m}(0)\sum_{\ell=\ell_{0}+1}^{\infty}
\int_{2^{\ell}<|\lambda|\leq 2^{\ell+1}}
|\mathbf{D}_{\lambda}^{I'}\widehat{a}(\lambda,m)|^{2}
|\mathcal{N}(\lambda,m)|^{-d(I'')}
|\lambda|^{\alpha+1}d\lambda\nonumber\\
&\leq& C \sum_{\ell=\ell_{0}+1}^{\infty}2^{-\ell d(I'')}\sum_{m=0}^{\infty}L^{\alpha}_{m}(0)
\int_{2^{\ell}<|\lambda|\leq 2^{\ell+1}}
|\mathbf{D}_{\lambda}^{I'}\widehat{a}(\lambda,m)|^{2}
|\lambda|^{\alpha+1}d\lambda\nonumber\\
&\leq& C~2^{-\ell_{0} d(I'')}\|\mathbf{D}_{\lambda}^{I'}\widehat{a}\|^{2}
_{L^{2}_{\alpha}(\widehat{\mathbb{K}})}\leq
C2^{-\ell_{0}d(I'')}\|a\|^{2-2d\,\frac{d(I')}{Q}}
_{L^{2}_{\alpha}(\mathbb{K})}\nonumber\\
&\leq& C\|a\|^{2-\frac{2d\tau}{Q}}
_{L^{2}_{\alpha}(\mathbb{K})}.
\end{eqnarray}
This completes the proof of (\ref{12}) is followed from (\ref{13})--(\ref{16}).
Next, we want to show that $T_{M}a$ satisfies the vanishing moment condition i.e.,
\begin{equation}\label{4.25}
\int_{\mathbb{K}}T_{M}a(x,t)(x,t)^{I}dV(x,t)=0,~~0\leq d(I)\leq [Q(\frac{1}{p}-1)].
\end{equation}
From (\ref{4.4.6}), if $d(I')+d(I'')\leq[Q(\frac{1}{p}-1],$ then
\begin{eqnarray*}
\|(\mathbf{D}_{\lambda}^{I'}\widehat{a})(\mathbf{D}_{\lambda}^{I''}M) \|^{2}_{L^{1}_{\alpha}(\widehat{\mathbb{K}})}&\leq&
\|(\mathbf{D}_{\lambda}^{I'}\widehat{a}) \|^{2}_{L^{2}_{\alpha}(\widehat{\mathbb{K}})}
\|(\mathbf{D}_{\lambda}^{I''}\widehat{a}) \|^{2}_{L^{2}_{\alpha}(\widehat{\mathbb{K}})}\\
&\leq& C \mathcal{N}(\lambda,m)^{\tau-1-d(I')+1}
\mathcal{N}(\lambda,m)^{-d(I'')}
\|a\|^{2-\frac{2d\tau}{Q}}_{L^{2}_{\alpha}(\mathbb{K})}\\
&\leq& C \mathcal{N}(\lambda,m)^{\tau-d(I')-d(I'')}
\|a\|^{2-\frac{2d\tau}{Q}}_{L^{2}_{\alpha}(\mathbb{K})}\\
&\leq& C |\lambda|^{\tau-d(I')-d(I'')}\rightarrow 0~~\mbox{as}~~\lambda\rightarrow 0,~~\mbox{for all} ~ m\in\mathbb{N}.
\end{eqnarray*}
Hence $$\mathbf{D}_{\lambda}^{I}(\widehat{a}(\lambda,m)M(\lambda,m))\rightarrow 0~~~~ \mbox{as}~~\lambda\rightarrow 0,~~0\leq d(I)\leq[Q(\frac{1}{p}-1)],
~~\mbox{for all} ~ m\in\mathbb{N},$$
in the sense of weak convergence, which complete the proof of (\ref{4.25}).
Finally, for general $f \in H^p(\mathbb{K}),$ we know that
$$f=\sum_{k}\beta_{k}a_{k},$$
where $a_{k}$'s are $(p,2,\tau-1)$--atoms and $\sum_{k}|\beta_{k}|^{p}\leq C\|f\|^{p}_{H^{p}(\mathbb{K})}.$ Then by the
above result, we have
$$T_{M}(f)=\mathcal{F}^{-1}(M\widehat{f})=
\sum_{k}\beta_{k}\mathcal{F}^{-1}(M\widehat{a}_{k}).$$
It follows that
\begin{eqnarray*}
\|\mathcal{F}^{-1}(M\widehat{f})\|_{H^p(\mathbb{K})}&\leq&
\sum_{k}|\beta_{k}|~.~\|\mathcal{F}^{-1}(M\widehat{a}_{k})\|_{H^p(\mathbb{K})}
\leq C\sum_{k}|\beta_{k}| ~.~\mathfrak{N}\big(\mathcal{F}^{-1}(M\widehat{a}_{k})\big)\\
&\leq& C\sum_{k}|\beta_{k}| \leq C \Bigg(\sum_{k}|\beta_{k}|^{p}\Bigg)^{1/p}\leq C\|f\|_{H^p(\mathbb{K})}.
\end{eqnarray*}
The proof of this theorem is therefore complete.
\end{proof}

\subsection{Applications}

We are now in a position to state our application, which has been inspired by \cite[see $\S$.4]{DeM}.

The positive symmetric in $L^{2}_{\alpha}$ operator $\mathfrak{L}_{\alpha}$ being hypoelliptic admits a selfadjoint extension. Denote it by $\overline{\mathfrak{L}_{\alpha}}.$ Let $E_{\lambda}$ be the spectral resolution for  $\overline{\mathfrak{L}_{\alpha}},$ (see, \cite{ST1}) i.e.,
$$ \overline{\mathfrak{L}_{\alpha}}f=\int_{0}^{\infty}\lambda dE_{\lambda}(f),\quad f\in Dom(\overline{\mathfrak{L}_{\alpha}}).$$
Then, according to the Littlewood--Paley--Stein theory (see, \cite{St}), if
\begin{equation}
f(\lambda)=\int_{0}^{\infty}e^{-\lambda s}\phi(s)ds
\end{equation}
for some $\phi\in L^{\infty}(0,\infty),$ the operator
$f(\mathfrak{L}_{\alpha})= \int_{0}^{\infty}f(\lambda)dE(\lambda)$ is bounded on $L^{p}_{\alpha}(\mathbb{K}),~p<\infty.$

As a simple corollary of Theorem \ref{multiplier}, the following corollary is convenient for application.
\begin{corollary}
Suppose $f$ is a function of class $\mathcal{C}^{\tau}$ such taht $|f^{(j)}(r)|\leq C~r^{-j}$ for $0\leq j\leq\tau.$ Set
$$M(\lambda,m)=f(\mathcal{N}(\lambda,m)). $$
Then the operator $T_{M}$ can be extended a bounded operator on $H^{p}(\mathbb{K}).$
\end{corollary}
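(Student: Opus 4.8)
The plan is to deduce the statement from Theorem \ref{multiplier}. I will verify that the symbol $M(\lambda,m)=f(\mathcal{N}(\lambda,m))$ satisfies Mihlin's condition (\ref{Mihlin}); then, by the Remark following Theorem \ref{multiplier} (Mihlin's condition implies the H\"ormander condition (\ref{Hormcondition})), an application of that theorem gives that $T_M$ extends to a bounded operator on $H^p(\mathbb{K})$. Thus everything reduces to proving
$$|\mathbf{D}_\lambda^I M(\lambda,m)|\le C\,\mathcal{N}(\lambda,m)^{-d(I)/2},\qquad 0\le d(I)\le\tau.$$

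To establish this I would fix $m\in\mathbb{N}$ and write $\mathcal{N}(\lambda,m)=4\bigl(m+\frac{\alpha+1}{2}\bigr)|\lambda|$, which for fixed $m$ is a positive constant multiple of $|\lambda|$, hence homogeneous of degree one in $\lambda$. Next I would expand $\mathbf{D}_\lambda^I\bigl[f(\mathcal{N}(\lambda,m))\bigr]$ by the chain rule (Fa\`{a} di Bruno's formula): the contribution of the part of the multi-index $I$ coming from the $x$-variable is unwound with Lemma \ref{lemma1} and the three-term recurrence for Laguerre polynomials, exactly as in the proof of Proposition \ref{p3.3.10}, while the contribution of the part coming from the $t$-variable is handled by ordinary differentiation in $\lambda$. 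Because $\mathcal{N}(\lambda,m)$ is homogeneous of degree one in $\lambda$, collecting terms exhibits $\mathbf{D}_\lambda^I\bigl[f(\mathcal{N}(\lambda,m))\bigr]$ as a finite linear combination of expressions of the form $\mathcal{N}(\lambda,m)^{k-d(I)/2}f^{(k)}(\mathcal{N}(\lambda,m))$ with integers $0\le k\le d(I)/2\le\tau$. Applying the hypothesis $|f^{(j)}(r)|\le Cr^{-j}$ with $r=\mathcal{N}(\lambda,m)$ bounds each term by $C\,\mathcal{N}(\lambda,m)^{k-d(I)/2}\mathcal{N}(\lambda,m)^{-k}=C\,\mathcal{N}(\lambda,m)^{-d(I)/2}$, and summing the finitely many terms yields (\ref{Mihlin}).

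The delicate point is the bookkeeping in this chain-rule expansion: one has to check that \emph{every} term produced by differentiating and differencing $f\circ\mathcal{N}$ carries exactly the homogeneity $\mathcal{N}(\lambda,m)^{-d(I)/2}$ --- that is, that the powers of $m$ generated by differencing the Laguerre index are precisely cancelled by accompanying powers of $|\lambda|$, so that they recombine into powers of $\mathcal{N}$ alone. This is the step modeled on the corresponding computation for the Heisenberg group in \cite[\S 4]{DeM}; the recurrences recorded in Lemma \ref{lemma1} and Proposition \ref{p3.3.10} play the same role here. Everything else is routine, and once (\ref{Mihlin}) has been checked the conclusion follows immediately from Theorem \ref{multiplier}.
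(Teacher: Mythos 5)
The paper itself offers no proof of this corollary: it is merely announced as ``a simple corollary of Theorem \ref{multiplier}''. Your strategy --- verify Mihlin's condition (\ref{Mihlin}), invoke the remark that it implies the H\"ormander condition (\ref{Hormcondition}), and apply Theorem \ref{multiplier} --- is therefore exactly the route the author intends, and it is the only sensible one; to that extent you are already more explicit than the paper.

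However, the step you set aside as ``delicate bookkeeping'' is the entire content of the corollary, and as you have set it up it does not close. Consider the pure $t$-part of the multi-index, $I=(0,i^{0})$, so that $d(I)=2i^{0}$ and (\ref{Mihlin}) demands $|\mathbf{D}^{I}_{\lambda}M(\lambda,m)|\leq C\,\mathcal{N}(\lambda,m)^{-i^{0}}$. If on this piece $\mathbf{D}^{I}_{\lambda}$ acts as $\partial_{\lambda}^{i^{0}}$ --- which is what your fixed-$m$ chain-rule computation presupposes --- then, since $\mathcal{N}(\lambda,m)=4\bigl(m+\tfrac{\alpha+1}{2}\bigr)|\lambda|$ is linear in $\lambda$ with an $m$-dependent slope, one gets
$$\bigl|\partial_{\lambda}^{i^{0}}f(\mathcal{N}(\lambda,m))\bigr|=\bigl(4(m+\tfrac{\alpha+1}{2})\bigr)^{i^{0}}\bigl|f^{(i^{0})}(\mathcal{N}(\lambda,m))\bigr|\leq C\,|\lambda|^{-i^{0}}=C\bigl(4(m+\tfrac{\alpha+1}{2})\bigr)^{i^{0}}\mathcal{N}(\lambda,m)^{-i^{0}},$$
and the factor $\bigl(4(m+\tfrac{\alpha+1}{2})\bigr)^{i^{0}}$ is unbounded in $m$; already for $f(r)=r^{is}$, the very example the paper wants to cover, the required uniform bound fails under this reading. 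So the cancellation of the powers of $m$ that you assert is not automatic: it can only come from taking $\mathbf{D}^{I}_{\lambda}$ to be the genuine difference--differential operator dual to multiplication by $(x,t)^{I}$ on the Heisenberg fan (as in De Michele--Mauceri), in which the differences in the Laguerre index $m$ carry step size $O(|\lambda|)$ and the $\lambda$-derivative is accompanied by compensating contributions from $\mathcal{L}_{m}^{\alpha}(|\lambda|x^{2})$. You would need to write that operator down explicitly and redo the computation for it; Lemma \ref{lemma1} and Proposition \ref{p3.3.10}, which you cite, concern derivatives of the characters in the variables $(x,t)$, not in $(\lambda,m)$, so they do not supply this. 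Until that is done the argument is incomplete --- as, to be fair, is the paper's.
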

\begin{example}
The fractional operators $\mathfrak{L}^{i \,s},~(I+\mathfrak{L})^{i \,s} ,~s\in\mathbb{R},$ are bounded on $H^p(\mathbb{K}),~0<p\leq1,$  where
$\widehat{\mathfrak{L}^{i \,s}}(f)(\lambda,m)=\mathcal{N}(\lambda,m)^{i \,s}\widehat{f}(\lambda,m)$ and
$\widehat{(I+\mathfrak{L})^{i \,s}(f)}(\lambda,m)=(1+\mathcal{N}(\lambda,m))^{i \,s}\widehat{f}(\lambda,m).$
\end{example}

\end{document}